\newenvironment{Eq}[1][]{\begin{equation}\ifthenelse{\equal{#1}{}}{}{\tag{#1}}\begin{aligned}}{\end{aligned}\end{equation}\ignorespacesafterend}
\newenvironment{Eq*}[1][]{\begin{equation*}\ifthenelse{\equal{#1}{}}{}{\tag{#1}}\begin{aligned}}{\end{aligned}\end{equation*}\ignorespacesafterend}
\numberwithin{equation}{section}
\newtheorem{theorem}{Theorem}[section]
\newtheorem{proposition}[theorem]{Proposition}
\newtheorem{lemma}[theorem]{Lemma}
\theoremstyle{remark}%
\newtheorem{remark}{Remark}[section] 
\theoremstyle{definition}
\DeclareMathOperator{\fd}{d}	\renewcommand{\d}{\fd}
\newcommand{\const}{\mathrm{const}.}
\newcommand{\Roma}[1]{\uppercase\expandafter{\romannumeral#1}}
\newcommand{\Alphab}{{\underline{\mathcal A}}}
\newcommand{\varC}{{\mathcal C}}
\newcommand{\alphab}{{\underline\alpha}}
\DeclareMathOperator{\I}{i}
\newcommand{\mm}{{\mathbf{m}}}
\newcommand{\sA}{{\slashed A}}
\def\pa{\partial}
\newcommand{\rR}{{\mathbb{R}}}
\newcommand{\rS}{{\mathbb{S}}}
\newcommand{\rD}{{\mathcal D}}
\newcommand{\rH}{{\mathcal H}}
\newcommand{\rHb}{{\underline{\mathcal H}}}
\newcommand{\Lb}{{\underline L}}
\newcommand{\hL}{{\hat L}}
\newcommand{\hLb}{{\hat \Lb}}
\newcommand{\spartial}{{\slashed \partial}}
\newcommand{\kl}[1]{\mathopen{}\left#1}
\newcommand{\kr}[1]{\right#1}
\newcommand{\Low}[1]{\limits_{\substack{#1}}}
\newcommand{\low}[1]{_{\substack{#1}}}
\crefname{equation}{}{}
\newcommand{\Tm}[1]{Theorem \ref{#1}}
\newcommand{\La}[1]{Lemma \ref{#1}}
\newcommand{\Pn}[1]{Proposition \ref{#1}}
\renewcommand{\tt}{{\tilde t}}
\newcommand{\tx}{{\tilde x}}
\newcommand{\tr}{{\tilde r}}
\newcommand{\tomega}{{\tilde\omega}}
\newcommand{\tu}{{\tilde u}}
\newcommand{\tv}{{\tilde v}}
\newcommand{\tLb}{{\tilde\Lb}}
\newcommand{\tL}{{\tilde L}}
\newcommand{\trS}{{\tilde\rS}}
\newcommand{\trD}{{\tilde\rD}}
\newcommand{\tA}{{\tilde A}}
\newcommand{\tF}{{\tilde F}}
\newcommand{\tphi}{{\tilde \phi}}
\newcommand{\te}{{\tilde e}}
\newcommand{\tD}{{\tilde D}}
\newcommand{\talpha}{{\tilde \alpha}}
\newcommand{\tpartial}{{\tilde \partial}}
\newcounter{part0}[subsection]
\renewcommand{\part}[1][]{\noindent\refstepcounter{part0}{\bfseries Part \number\value{part0}:} #1.\par}
\newcounter{part1}[part0]
\newcounter{part2}[part1]
\title{Characteristic initial value problem for nonlinear wave equation with singular initial data}
\author{Wei Dai}
\address{School of Mathematical Sciences\\ Zhejiang University of Technology\\ Zhejiang, China}
\email{daiw23@zjut.edu.cn}
\author{Shiwu Yang}
\address{Beijing International Center for Mathematical Research\\ Peking University\\ Beijing, China}
\email{shiwuyang@math.pku.edu.cn}
\date{\today}
\begin{document}

\bibliographystyle{plain}

\begin{abstract}
In this paper, we study the characteristic initial value problem for a class of nonlinear wave equations with data on a conic light cone in the Minkowski space $\mathbb{R}^{1+3}$. We show the existence of local solution for a class of singular initial data in the sense that the standard energy could be infinite and the solution may blow up at the conic point. As an application, we improve our previous result on the inverse scattering problem for the Maxwell-Klein-Gordon equations with scattering data on the future null infinity. 
\end{abstract}

\keywords{wave equation, characteristic initial data, local existence}

\subjclass[2020]{35L05, 35A07}

\maketitle

\section{Introduction}
\label{Sn:I}
Let $(t, x)$ be the standard coordinates in the Minkowski space $\mathbb{R}^{1+3}$. 
Introduce the null coordinates 
\[
u=\frac{t-|x|}{2},\quad v=\frac{t+|x|}{2},\quad \omega =\frac{x}{|x|}, \quad r=|x|
\]
 and let  $\mathcal{H}_0=\{u=0,\quad 0\leq v\leq 1\}$ be the conic light cone. This paper is devoted to studying the characteristic initial value problem for a class of semilinear wave equation 
\begin{Eq}
\label{Eq:Me}
\square\phi=(-\partial_t^2+\Delta)\phi=F(\phi,\partial\phi),\qquad
\phi|_{\mathcal{H}_0}=\varphi_0(v,\omega)
\end{Eq}
with singular initial data in the sense that $\varphi_0$ may blow up at the conic point $(0, 0)$ of the light cone $\mathcal{H}_0$. Here 
  the nonlinearity $F$ is at least quadratic in terms of $\phi$ and $\pa \phi$.  

Motivated by the studying of some physical models like Einstein equations in mathematical general relativity, the characteristic initial value problem for nonlinear wave equation is of particular importance both from the mathematical and physical point of view. We refer to for example \cite{Chruciel16:CIV:NLW} for a more comprehensive introduction on this subject. The local well-posedness of characteristic initial value problem for nonlinear hyperbolic system has been studied extensively in the past decades, dating back to the classical results of 
 Cagnac in \cite{cagnac81:NW:CI} and 
Rendall in \cite{Rendall90:CIVP:NLW} with sufficiently smooth initial data on the characteristic initial null cone or two intersecting null hypersurfaces. 
Generalizations and applications of these results were contributed in 
\cite{Hagen90:CIVP:NLW},
  \cite{Cagnac94:CIV:NLW},
  \cite{Dossa01:CIV:NLW},
     \cite{Dossa03:CIV:YMH}, 
   \cite{Dossa05:CIV:SLW},
      \cite{Cabet08:CIV:NLW},
   \cite{Dossa10:CIVP:EYMH},
\cite{Friedrich13:EE:CIVP}, 
\cite{Chrusciel13:EE:pastnull},
 \cite{Dossa19:CIVP:NW} 
 \cite{Bap21:CIV:EV} and references therein. A common future of these works, as far as we know, is that the initial data are sufficiently regular at the conic point or the intersecting two sphere. A natural question is whether the initial data can be singular at the conic point so that one still can construct a local solution.

Before stating our main result and to see how singular the data could be, let's take the following initial data and its corresponding 
linear solution  
\begin{Eq*}
\varphi_0=v^{\delta-1},\qquad \phi_0=\frac{\kl(t+r\kr)^\delta-(t-r)^\delta}{2^\delta r}
\end{Eq*}
for example. For fixed $0<\delta\ll 1$, 
it can be checked that $\phi_0$ is smooth and verifies the linear wave equation
\[
\Box\phi_0=0
\]
for the region above $\mathcal{H}_0$. 
Obviously $\varphi_0$ 
is singular at the conic point and has infinite standard energy. 
Our first result is to show that the solution to the nonlinear wave equation \eqref{Eq:Me} exists in a  neighborhood of the light cone $\mathcal{H}_0$ for such singular initial data. Without loss of generality and for simplicity, we will investigate the following type of nonlinearity
\begin{Eq}\label{Eq:toF}
F(\phi,\partial\phi)=C_0\phi^3+ C_1^\mu\phi\partial_\mu\phi
\end{Eq}
with fixed constants $C_0$ and  $C_1^\mu$. Commutators used in this paper consist of the scaling vector field and the Lorentz rotations 
 \[
 \Omega=\{\Omega_{ij}=x_i\partial_j-x_j\partial_i\},\quad Z=\{S=t\partial_t+r\partial_r,\quad L_i=t\partial_i+x_i\partial_t, \quad \Omega_{ij}\}. 
 \]
 We may also use $\spartial $ to be short for the angular derivative $r^{-1}\pa_{\omega}$. 
\begin{theorem}\label{Tm:M1}
Consider the characteristic initial value problem to the semilinear wave equation \ref{Eq:Me} with \eqref{Eq:toF}. 
Assume that the initial data $\varphi_0$ on the initial cone $\mathcal{H}_0$ is bounded in the following sense
\begin{Eq}\label{Eq:Cod}
\sum\Low{n_1\leq 2\\n_2\leq 5-n_1}\int_0^1 \int_{\rS^2}v^{1-2\delta}\kl|(v\partial_v)^{n_1}\partial_\omega^{n_2}\varphi_0\kr|^2\d\omega\d v \leq M_0^2
\end{Eq}
for some positive constant $M_0$ and a small positive constant $\delta$. 
Then there exist positive constants $M_0'$, $\varepsilon$, depending only on $M_0$ and $\delta $, such that the characteristic initial value problem to \eqref{Eq:Me} admits a unique solution $\phi$  in the neighborhood of light cone $ \{ 0\leq v\leq 1,0\leq u\leq \varepsilon\}$ verifying the following weighted energy estimates
\begin{Eq}\label{Eq:Er}
&\sum\limits_{k\leq 1, l\leq 2}\sup_{0\leq u\leq U}\int_u^V\int_{\rS^2}v\kl|\partial_v(rZ^k\Omega^l\phi )\kr|^2+u\kl|\spartial(rZ^k\Omega^l\phi )\kr|^2\d\omega\d v\\
 +&\sum\limits_{k\leq 1, l\leq 2}\sup_{0\leq v\leq V}\int_0^{\min\{v,U\}}\int_{\rS^2}u\kl|\partial_u(rZ^k\Omega^l\phi )\kr|^2+v\kl|\spartial(rZ^k\Omega^l\phi )\kr|^2\d\omega\d u\leq {M_0'}^2V^{2\delta}
\end{Eq}
for any $0\leq U\leq \min\{\varepsilon,V\}\leq 1$. Consequently the solution also satisfies the pointwise bound 
 $$|\phi(U,V,\omega)|\leq M_0' V^{\delta-1},\quad \forall 0 \leq V\leq 1, \quad 0 \leq U\leq \varepsilon. $$ 
\end{theorem}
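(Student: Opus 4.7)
The approach is to work with the rescaled unknown $\psi = r\phi$, which satisfies
$-4\pa_u\pa_v\psi + r^{-2}\Delta_{\mathbb{S}^2}\psi = rF(\phi,\pa\phi)$
and whose restriction $\psi|_{\mathcal{H}_0}=v\varphi_0$ is regular at the conic point even when $\varphi_0$ is singular (in the linear model $\varphi_0=v^{\delta-1}$ one has $\psi|_{\mathcal{H}_0}\sim v^{\delta}\to 0$). This explains the appearance of the factor $r$ inside every norm on the left-hand side of \eqref{Eq:Er}. I would set up a Picard iteration in which $\phi^{(n+1)}$ solves the linear characteristic problem $\square\phi^{(n+1)}=F(\phi^{(n)},\pa\phi^{(n)})$ with the same data $\varphi_0$, and close the scheme by a bootstrap on the norm defining the left-hand side of \eqref{Eq:Er}. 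The linear estimate would be produced by applying the multipliers $v\pa_v\psi$ and $u\pa_u\psi$ (for the fluxes on $\mathcal{H}_u$ and $\underline{\mathcal{H}}_v$, respectively) to the reduced equation for $\psi$, integrating over the null strip $\{0\leq u'\leq U,\ 0\leq v'\leq V\}$, and using integration by parts on $\mathbb{S}^2$ for the angular Laplacian term.

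To obtain the bound \eqref{Eq:Er} at the commuted level, I would apply $Z^k\Omega^l$ to \eqref{Eq:Me}. Since $Z\in\{S,L_i,\Omega_{ij}\}$ and $\Omega_{ij}$ all commute with $\square$ up to constants, one gets a wave equation for $Z^k\Omega^l\phi$ with a source consisting of lower-order commutator terms of the same schematic form. Crucially, these generators restrict to $\mathcal{H}_0$ as tangential vector fields: $S|_{\mathcal{H}_0}=v\pa_v$, while $L_i$ and $\Omega_{ij}$ act only in $(v,\omega)$, so the initial data of $Z^k\Omega^l\phi$ on $\mathcal{H}_0$ are controlled directly by the hypothesis \eqref{Eq:Cod}. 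The $V^{2\delta}$ growth on the right-hand side of \eqref{Eq:Er} then emerges from converting $\int_0^V v\,|\pa_v(vZ^k\Omega^l\varphi_0)|^2\,dv$ into the $v^{1-2\delta}$-weighted norm of \eqref{Eq:Cod} via a Hardy-type inequality, yielding a linear estimate of the schematic form $\mathcal{E}[\phi]\lesssim M_0^2 V^{2\delta}+(\text{nonlinear source})$, where $\mathcal{E}[\phi]$ is the left-hand side of \eqref{Eq:Er}.

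To close the iteration, I would first extract pointwise bounds from the bootstrap $\mathcal{E}[\phi^{(n)}]\leq {M_0'}^2 V^{2\delta}$: Sobolev embedding on $\mathbb{S}^2$ (using the two rotations $\Omega^l$, $l\leq 2$) together with a one-dimensional weighted Sobolev estimate in $v$ (using the $S$-commutator once) give $|rZ^k\Omega^l\phi^{(n)}|\lesssim M_0' v^\delta$, hence the pointwise bound $|\phi^{(n)}|\lesssim M_0' v^{\delta-1}$ claimed at the end of the theorem. Inserting this into the source $F(\phi^{(n)},\pa\phi^{(n)})=C_0\phi^3+C_1^\mu\phi\pa_\mu\phi$ and integrating over the thin strip $0\leq u\leq\varepsilon$ produces a factor $\varepsilon^\sigma$ for some $\sigma>0$, absorbing the nonlinear contribution into the linear estimate once $\varepsilon$ is chosen small in terms of $M_0'$ and $\delta$. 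Convergence of $\phi^{(n)}$, and uniqueness, follow by applying the same linear estimate to the differences $\phi^{(n+1)}-\phi^{(n)}$ at one fewer derivative.

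The principal technical obstacle I expect is precisely the nonlinear estimate at the conic vertex: since $|\phi|\sim v^{\delta-1}$ already saturates the singularity, $|\phi|^3\sim v^{3\delta-3}$ and $|\phi\,\pa\phi|\sim v^{2\delta-3}$ are far from square-integrable against the natural weights. The rescue mechanism is the interplay between the $u$-weight in \eqref{Eq:Er} (which supplies $\varepsilon$-smallness on the strip $\{0\leq u\leq\varepsilon\}$) and the fact that, once rewritten in terms of $\psi=r\phi$, the nonlinearity exhibits enough explicit factors of $r^{-1}=(v-u)^{-1}$ to be balanced against $v$-weights after careful tracking of the null geometry and of the different scaling of $\pa_v$ versus $\pa_u$ derivatives. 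Making this book-keeping quantitative while preserving sharp $\varepsilon$-dependence, so that the bootstrap constant $M_0'$ does not run away, is the heart of the argument.
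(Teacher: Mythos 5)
Your overall skeleton (Picard iteration on the linear characteristic problem, commutation with $Z$ and $\Omega$ which are tangent to $\mathcal H_0$, a weighted multiplier of the form $v\partial_v$, $u\partial_u$ acting on $r\phi$, Hardy-type inequalities to convert the data norm \eqref{Eq:Cod} into the flux bound $\lesssim M_0^2V^{2\delta}$) matches the paper's strategy, and your identification of the nonlinear estimate at the conic vertex as the crux is exactly right. The problem is that you stop there: the mechanism you offer to resolve it --- smallness $\varepsilon^\sigma$ from integrating over the thin strip $\{0\le u\le\varepsilon\}$ together with explicit factors of $r$ --- does not suffice, and your bootstrap as stated would not close. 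Near the tip one may have $U=V\le\varepsilon$, so the strip is not thin relative to the local scale and no power of $\varepsilon$ is gained for those contributions; with only the pointwise input $|\phi|\lesssim M_0'v^{\delta-1}$, terms such as the $\hLb$-flux contributions produce, after the weighted Cauchy--Schwarz, integrals of the form $\int_0^V v^{\delta-1}(\text{energy up to }v)\,\d v$ whose constant is of size $M_0'^2$ rather than $o(1)$ in $\varepsilon$, so the bootstrap constant indeed ``runs away,'' which is precisely the obstruction you name but do not remove.

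The paper closes this by two ingredients absent from your proposal. First, it splits $\phi_k=\phi_0+w_k$, where $\phi_0$ is the free evolution of the singular data and carries the full singular profile: its weighted fluxes and sphere-averaged bounds (e.g. $\|\phi_0^{(0,2)}\|_{L^2_\omega}\lesssim M_0v^{\delta-1}$, $\|\phi_0^{(1,2)}\|_{L^2_\omega}\lesssim M_0r^{-1/2}v^{\delta-1/2}$) are proved once and for all from \eqref{Eq:Cod} via the Hardy lemma; the remainders $w_k$ have vanishing data on the cone and satisfy an \emph{improved} estimate $E[w_k^{(1,2)}](U,V)\le M_0^2U^{\delta}V^{\delta}e^{2M_1V^{2\delta}}$, and it is the extra factor $U^{\delta}\le\varepsilon^{\delta}$ (and $u^{\delta/2}$ in the pointwise versions) --- not the strip width per se --- that supplies the usable smallness. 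Second, the induction norm carries a Gronwall-type weight $e^{2M_1V^{2\delta}}$ with $M_1=4CM_0^2$ chosen large: the borderline vertex terms (the $\hLb$-parts $I_{1,2}$, $I_{2,2}$) yield a contribution of size $CM_0^2/M_1$ which gains no power of $\varepsilon$ at all and is absorbed only by taking $M_1$ large; the remaining terms are then beaten by $\varepsilon^{\delta/2}$. Also note the paper's multiplier is specifically $2vL+u\Lb$ (with the lower-order correction $\chi=r^{-1}(v+r)$), the coefficient $2$ being what makes the bulk term $\frac{v}{2r}|\spartial f|^2$ sign-definite; with your symmetric choice this positivity is not automatic and would have to be re-derived or compensated. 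Without the $\phi_0/w$ splitting and the large-parameter exponential weight, your scheme as written cannot absorb the vertex terms, so the proof has a genuine gap at its central step.
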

\begin{remark}
Assuming $\phi \sim v^{-\alpha}$, we then formally have $\partial^k \phi \sim v^{-\alpha-k}$. This implies that the singularity of the nonlinear term is at most $v^{-\alpha-2}$.  
Thus, nonlinear terms such as $(\partial \phi)^2$ cannot appear, while nonlinear terms like $\phi^4$ would require a bit lower singularity.
\end{remark}
This local existence result for the characteristic initial value problem with singular initial data also holds for much more general nonlinearity. The above particular nonlinear model is partially motivated by the inverse scattering problem for the Maxwell-Klein-Gordon system studied by the authors in \cite{Yang:MKG:scattering}. Let $A$ be a 1-form in the Minkowski space $\mathbb{R}^{1+3}$. For complex scalar field $\phi$, the associated covariant derivative is given by 
\begin{Eq*}
D \phi =\partial \phi+\I A \phi ,\qquad \I:=\sqrt{-1}. 
\end{Eq*}
Recall the Maxwell-Klein-Gordon  system (MKG) for the Maxwell field $F=dA$ and the scalar field $\phi $
\begin{Eq}
\label{Eq:MKGe-Fphi}
\begin{cases}
\partial^\mu F_{\mu\nu}=-\Im(\phi\cdot\overline{D_\nu \phi}),\\
\square_A\phi:=D^\mu D_\mu \phi=0.
\end{cases}
\end{Eq}
In Minkowski space $\mathbb{R}^{1+3}$,  introduce a null frame $\{\Lb,L,e_1,e_2\}$ adapted to the null coordinates $\{u, v, \omega\}$, in which 
\[
\Lb=\partial_u=\pa_t-\pa_r,\quad L=\pa_v=\pa_t+\pa_r
\] 
 and ${e_1, e_2}$ is an orthonormal basis of the tangent space of the two-sphere with radius $r$. Let 
 \[
\alphab_j=F(\Lb,  e_j),\quad j=1, 2. 
 \]
 For fixed $u\in \mathbb{R}$ and $\omega\in \mathbb{S}^2$, the scattering data $(\Alphab,\Phi)$ at the future null infinity is given by 
 \begin{Eq}
\label{eq:radiation}
\Alphab(u, \omega)=\lim_{v\rightarrow\infty} (r\alphab),\qquad
\Phi(u, \omega)=\lim_{v\rightarrow\infty} (r\phi).
\end{Eq} 
It is known from the work \cite{YangYu:MKG:smooth} that the limit exists for the Cauchy problem. The inverse scattering problem is to construct solutions to the MKG equation with given scattering data  $\Alphab$ and $\Phi$ on the future null infinity. 

For any $U_*\in\rR$ recall the weighted energy $\|\cdot\|_{S\!N_{U_*}}$ for the scattering data
\begin{Eq*}
\|(\Alphab,\Phi)\|_{S\!N_{U_*}}^2:=&\hspace{-5pt}\sum\Low{n_1\leq 1\\n_2\leq 6-3n_1}\hspace{-5pt}\int_{U_*}^\infty \int_{\mathbb{S}^2} \kl<u\kr>^{1+2\delta+2n_1}|\partial_u^{n_1} \partial_\omega^{n_2}\Alphab|^2\d\omega\d u\\
&\quad+\sum\Low{n_1\leq 2\\n_2\leq 5-2\max\{n_1-1,0\}}\int_{U_*}^\infty \int_{\mathbb{S}^2} \kl<u\kr>^{-1+2\delta+2n_1}|D_u^{n_1} D_\omega^{n_2}\Phi|^2\d\omega\d u
\end{Eq*} 
defined in \cite{Yang:MKG:scattering}. Here $\kl<u\kr>:=\sqrt{|u|^2+1}$ and the covariant derivative $D$ can be  determined by $\Alphab$.
See  \cite{Yang:MKG:scattering} for a more detailed discussions.
Based on the  local existence result of Theorem \ref{Tm:M1}, we can show that 
\begin{theorem}\label{Tm:M2}
For any given scattering data $(\Alphab,\Phi)$ such that  
 $ \|(\Alphab,\Phi)\|_{S\!N_{U_*}}$ is finite for any $U_*\in \mathbb{R}$, there exists a solution $(F,\phi)$ to the MKG system \eqref{Eq:MKGe-Fphi} which scatters to $(\Alphab,\Phi)$ in the sense of \eqref{eq:radiation}. Moreover the solution is unique if for any $U_*\in \mathbb{R}$
\begin{Eq}
\label{eq:uniqueness}
\lim_{v\rightarrow\infty}\|(r\alphab,r\phi)\|_{S\!N_{U_*}'}=\|(\Alphab,\Phi)\|_{S\!N_{U_*}},
\end{Eq}
in which the weighted energy $\|(r\alphab,r\phi)\|_{S\!N_{U_*}'}$ is similarly defined on the light cone  $\{v=\const\}$. 
\end{theorem}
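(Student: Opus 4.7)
The plan is to reduce the inverse scattering problem to a sequence of backward characteristic initial value problems, apply (an MKG version of) \Tm{Tm:M1} to handle the problematic neighborhood of timelike/spatial infinity, and then extend via standard Cauchy theory before taking a limit.

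First I would perform a conformal inversion in the region $\{v\geq 1,\ u\geq U_*\}$ that maps the future null infinity $\{v=\infty\}$ to a characteristic cone $\{\tv=0\}$ in new null coordinates $(\tu,\tv,\omega)$, with the rescaled fields $\tphi=r\phi$ and $\tsA=r\alphab$ becoming the initial data on the image cone. The scattering norm $\|(\Alphab,\Phi)\|_{S\!N_{U_*}}$ is tailored so that, after inversion, the transformed data satisfy precisely the singular characteristic bound \eqref{Eq:Cod}: the weight $\kl<u\kr>^{-1+2\delta}$ translates into the weight $\tv^{1-2\delta}$ at the conic point that corresponds to spatial infinity $i^0$ (resp.\ to $i^+$, depending on normalization), and the counts of $\pa_u$- and angular derivatives match the $n_1\leq 2$, $n_2\leq 5-n_1$ requirement in \eqref{Eq:Cod}. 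Thus the hypotheses of the local-existence theorem are verified on the conformal side.

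Next I would apply the MKG analogue of \Tm{Tm:M1} in a thin slab $\{0\leq \tu\leq \varepsilon\}$ of the image cone. Fixing a Lorenz-type gauge, the coupled system becomes a collection of semilinear wave equations for $(A,\phi)$ whose nonlinearities are quadratic in $(A,\phi,\partial A,\partial\phi)$, so the bootstrap and weighted energy argument sketched for \eqref{Eq:Er} adapts: the $\tv^{\delta-1}$ pointwise singularity is consistent with closing the estimates because the cubic self-interaction produces weight $\tv^{3\delta-3}$ which is integrable against the weights appearing in \eqref{Eq:Er} for $\varepsilon$ small. Once the solution is obtained in the slab, inverting the conformal change of variables and translating back to the physical region yields a solution $(F,\phi)$ on a tubular neighborhood of $\{v=\infty\}\cap\{u\geq U_*\}$; the standard Cauchy theory of \cite{YangYu:MKG:smooth}, applied to the induced data on a spacelike (or outgoing null) slice interior to this neighborhood, extends the solution to the whole region $\{u\geq U_*\}$ while preserving the scattering behaviour \eqref{eq:radiation} by construction. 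Letting $U_*\to -\infty$ along a sequence and using the uniform-in-$U_*$ weighted energy bounds supplied by the adapted \Tm{Tm:M1} (which depend only on $\|(\Alphab,\Phi)\|_{S\!N_{U_*}}$) produces the desired global solution that scatters to $(\Alphab,\Phi)$.

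For uniqueness, suppose $(F^{(1)},\phi^{(1)})$ and $(F^{(2)},\phi^{(2)})$ both scatter to $(\Alphab,\Phi)$ subject to \eqref{eq:uniqueness}. The matching condition forces their conformally transformed characteristic data on $\{\tv=0\}$ to coincide in the norm built into \eqref{Eq:Cod}, so the uniqueness part of \Tm{Tm:M1} identifies them on a neighborhood of the cone; backward Cauchy uniqueness for MKG then propagates the identification to $\{u\geq U_*\}$, and taking $U_*\to -\infty$ gives global uniqueness. The main obstacle throughout is the MKG-specific upgrade of \Tm{Tm:M1}: one must verify that the gauge choice, the charge, and the commuting vector fields $Z,\Omega$ interact with the singular $\tv^{\delta-1}$ bound without introducing logarithmic losses, which I expect to handle exactly as in the scalar case by performing all estimates at the level of the rescaled fields $(\tsA,\tphi)$ and exploiting the null structure of the MKG nonlinearity to absorb the worst singular terms.
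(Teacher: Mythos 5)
Your overall strategy is the same as the paper's: conformally invert so that null infinity near timelike infinity becomes a finite characteristic cone with singular vertex, check that the scattering norm turns into the weighted bound of \eqref{Eq:Cod}, apply \Tm{Tm:M1} near the vertex, undo the inversion, and extend by finite-energy Cauchy theory (the paper uses Klainerman--Machedon \cite{MKGkl} from the slice that is the image of a hyperboloid $\tilde\varC$ away from the conic point), with uniqueness obtained from the local uniqueness plus Cauchy uniqueness. However, there is a genuine gap at the central step. The scattering data prescribes the \emph{curvature} component $\Alphab$ and the scalar $\Phi$, while \Tm{Tm:M1} (after the Lorenz-gauge reduction, under which the MKG nonlinearities $A\partial\phi$, $\phi\partial\phi$ and cubic terms are already of the form \eqref{Eq:Me}, so no separate ``MKG analogue'' with a new bootstrap is needed) requires characteristic data for the \emph{potential} components and $\phi$ satisfying \eqref{Eq:Cod}. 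Writing ``$\tsA=r\alphab$ becomes the initial data'' conflates the two: one must first fix the gauge on the cone (the paper arranges $A_L=0$ on $\rH_0$), then construct $\sA$ and $A_\Lb$ by integrating the transport equations $L(rA_{e_j})=r\alpha_j$ and $L(rL(rA_\Lb))=\dots$ along the generators, and then prove the weighted bounds $E_0[\sA]$, $E_0[\phi]$, $E_0[A_\Lb]$ via the Hardy-type inequality \eqref{Eq:kt3} and Sobolev product estimates. This is the bulk of the proof of \Tm{Tm:M2}, and it also explains why your claim that ``the counts of derivatives match'' is false as stated: the scattering norm demands $n_2\le 6-3n_1$ angular derivatives of $\Alphab$, strictly more than the $n_2\le 5-n_1$ of \eqref{Eq:Cod}, precisely because derivatives are consumed in constructing and estimating the potential (and in the nonlinear terms such as $r^2\Im(\phi\cdot\overline{L\phi})$ entering the $A_\Lb$ transport).

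Two smaller inaccuracies: the conic point of the image cone is timelike infinity $i^+$, not spatial infinity $i^0$ (with the inversion centered at $T_*=2U_*-2^{-1}$ one has $\tv=(4u_*)^{-1}\to 0$ as $u\to+\infty$), which is the whole point of the theorem --- weakening the decay requirement as $u\to\infty$; and the limit $U_*\to-\infty$ cannot rely on ``uniform-in-$U_*$'' bounds, since $\|(\Alphab,\Phi)\|_{S\!N_{U_*}}$ may grow as $U_*\to-\infty$; one instead glues the solutions on overlapping regions using uniqueness, as in \cite{Yang:MKG:scattering}.
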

The regularity assumption  in \cite{Yang:MKG:scattering} is that the scattering data decay sufficiently fast at the time like infinity, that is $1+2\delta>6$ in the weighted energy norm, which is based on the local existence result for the characteristic initial value problem with smooth initial data.
However on the other hand, the Cauchy problem studied in \cite{YangYu:MKGrough} indicates that the solution to the MKG equation exists globally and decays like linear wave as long as $\delta>0$. This means that the above theorem improves the regularity for the inverse scattering problem to the case when $\delta>0$, which is then consistent with the Cauchy problem.

The proof for Theorem \ref{Tm:M1} is procedurally 
standard. 
We use iteration process to construct the local solution in a neighborhood of the $\rH_0$. 
The existence of linear solution to such characteristic initial value problem has been discussed for example in \cite{Friedlander75:wave:curved}, which is based on the case when the solution is spherically symmetric. The general case then follows by using Lorentz rotation. See more details in the beginning of Section \ref{sec:Th1}. 

To show that the sequence of linear solutions to the characteristic initial value problem converges, we then need to show that the linear solution lies in certain weighted Sobolev space. The weights, which are polynomial power of the distance to the conic point, captured the possible blow up of the solution near the origin. Partially inspired by the works \cite{Luk12:EE:local}, \cite{Igor:impulsewave:local}, \cite{Igor17:Interaction:2}, such weighted energy estimates are carried out by using vector field method adapted to regions bounded by null hypersurfaces, with the key new multiplier
$$2v\partial_v+u\partial_u,
$$
which is built upon the scaling vector field and the tangential vector field of the out going null hypersurface. The role played by this multiplier is that the bulk integral has the same sign with the weighted energy flux through the future null hypersurfaces, which enables us to control the weighted energy in terms of the initial weighted energy.

\textbf{Acknowledgments.}  
The authors would like to thank the anonymous referee for the careful reading and valuable comments. 
The first author is supported by NSFC (No.TBD).
The second auther is supported by the National Key R\&D Program of China 2021YFA1001700 and the National Science Foundation of China 12171011,  12141102.

\def\pa{\partial}
\section{Preliminary}
\subsection{Additional notations}
To avoid too many constant, throughout this paper, we use the convention that $x\lesssim y$ ( or equivalently $y\gtrsim x$) means $x\leq C y$ ( or equivalently $Cy\geq x$) for some constant $C>0$ 
depending only on $\delta$.

At any fixed point $(t, x)$, we may choose the frame $e_1$ and  $e_2$ such that
\begin{equation*}
 [L, e_j]=-\frac{1}{r}e_j, 
 \quad [\Lb, e_j]=\frac{1}{r}e_j,\quad
[e_1, e_2]=0,\quad j\in\{1, 2\}.
\end{equation*}
under which the covariant derivatives verify
\begin{equation*}
 \begin{split}
&\nabla_L L=0,\quad   \nabla_L \Lb=0, \quad \nabla_L e_j=0,\quad \nabla_{\Lb}\Lb=0, \quad \nabla_{\Lb}e_j=0,\\
&\nabla_{e_j}L=r^{-1}e_j,\quad \nabla_{e_j}\Lb=-r^{-1}e_j, \quad \nabla_{e_1}e_2=\nabla_{e_2}e_1=0,\quad \nabla_{e_j}e_j=-r^{-1}\pa_r.
 \end{split}
\end{equation*}
Here $\nabla$ is the covariant derivative in Minkowski space.

Since we are studying the characteristic initial value problem, outgoing and incoming null hypersurfaces will be of particular interest
\begin{Eq*}
\rS_{U}^V:=&\{(t,x):u=U,~v=V\},\quad&
\rH_{U}^{V_1,V_2}:=&\{(t,x):u=U,~V_1\leq v\leq V_2\},\\
\rHb_{U_1,U_2}^{V}:=&\{(t,x):U_1\leq u\leq U_2,~v=V\},\quad&
\rD_{U_1,U_2}^{V_1,V_2}:=&\{(t,x):U_1\leq u\leq U_2,~V_1\leq v\leq V_2\}.
\end{Eq*}
See the following  figure  for an illustration of the surfaces and regions. 
\begin{figure}[H]
\centering
\includegraphics[width=0.3\textwidth]{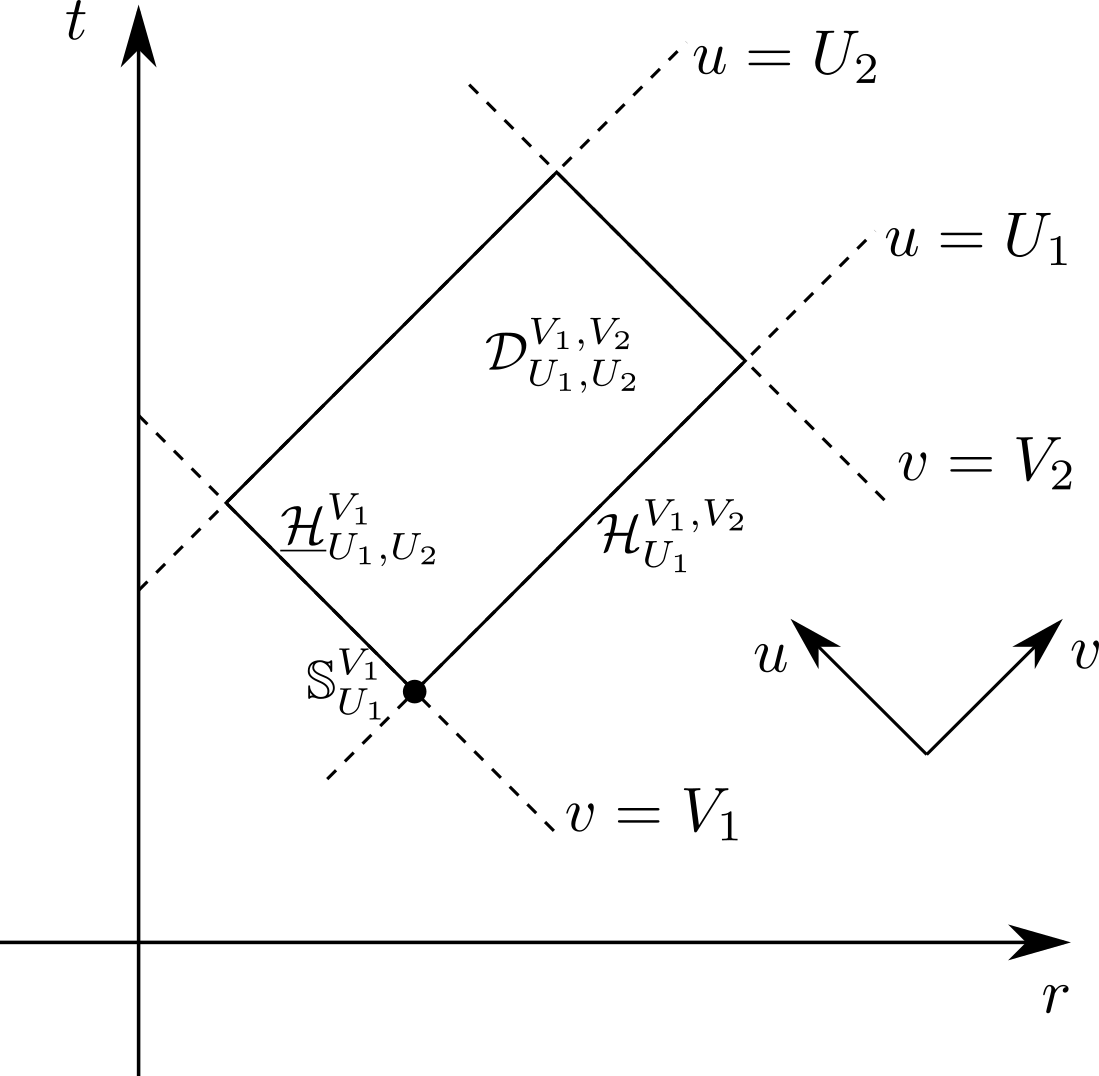}\hspace{70pt}
\includegraphics[width=0.3\textwidth]{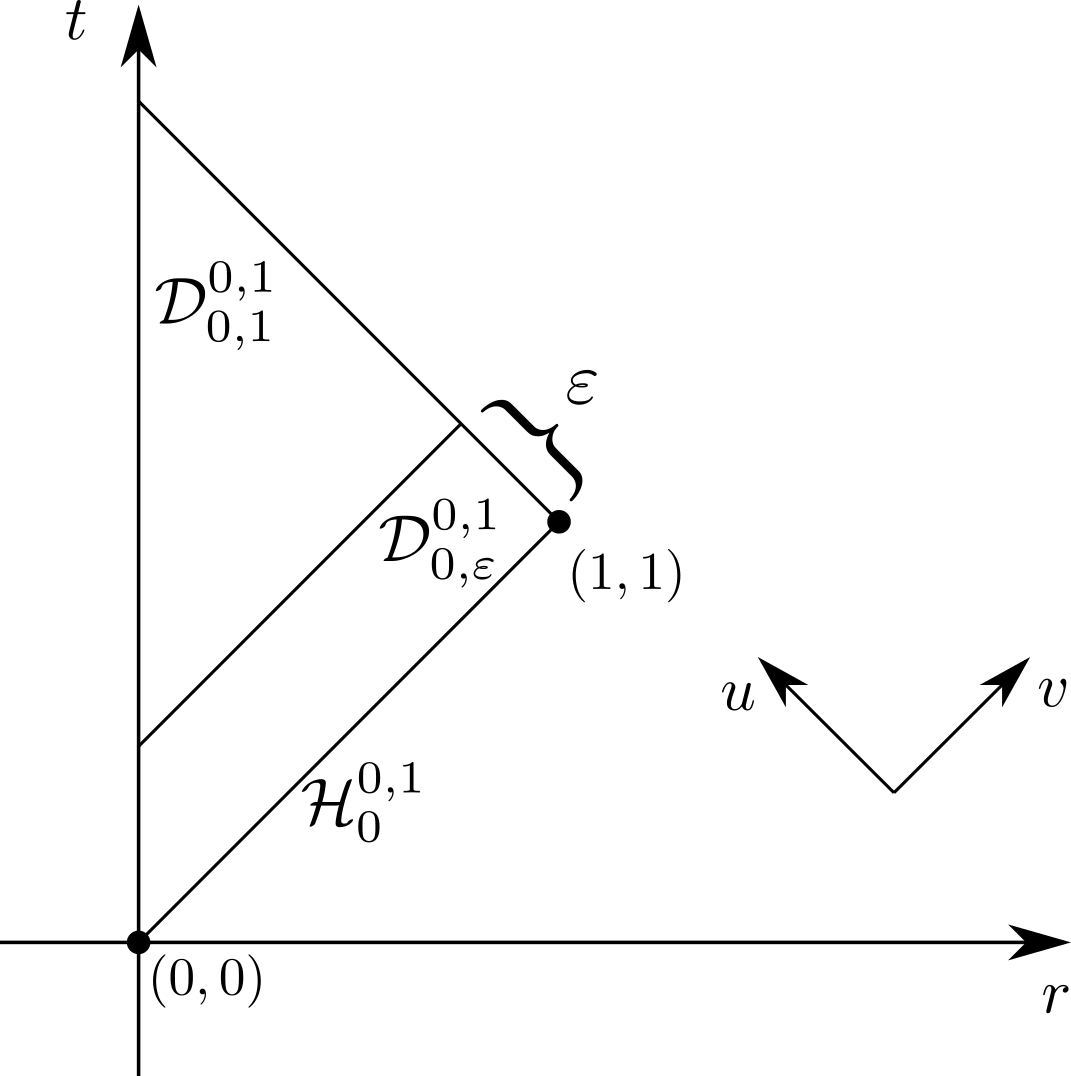}
\end{figure}
For simplicity, during the proof, we may omit the volume element in the integral.
For example 
\begin{Eq*}
\int_{\rS_U^V}f=\kl.\int_{|\omega|=1}f\cdot r^2\d\omega\kr|_{u=U,v=V}.
\end{Eq*}
We will also use the modified derivative. For any vector field $X$ in $\mathbb{R}^{1+3}$, $\hat{X}$ stands for $\hat{X}=X+r^{-1}X(r)$. More precisely, when acting on scalar field, it holds that 
\[
\hat{\pa}\phi=\pa \phi
+r^{-1}\pa(r)\phi,\quad \hat{L}\phi=L\phi+ r^{-1} \phi,\quad \hat{\Lb}\phi=\Lb\phi-r^{-1}\phi. 
\] 
For simplicity we may also use $\partial_{\omega} f$ to be short for $\Omega f$. For nonnegative integers $k, l$, the notation $f^{(k,l)}$ stands for taking sum over all $Z^{k_1}\Omega^{l_1}f$ with $k_1\leq k$ and $l_1\leq l$. For example
\begin{Eq*} 
\int_{\rHb_{0,U}^V}u|\hLb\phi_0^{(1,2)}|^2=\sum\limits_{k\leq 1, l\leq 2} \int_{\rHb_{0,U}^V}u|\hLb Z^k \Omega^l\phi_0 |^2.
\end{Eq*}

\subsection{Technical tools}
We will frequently use the following generalized  Hardy's type  inequality. 
\begin{lemma}
\label{lem:Hardy}
Assume that $f$ and $g$ are $C^1$ functions defined on the interval $[s_1,s_2]$ such that  $g$ is nonnegative and increasing. 
Then it holds that 
\begin{align}
|f(s_2)|^2g^{-1}(s_2)+g^{-2}(s_2)\int_{s_1}^{s_2}|f|^2g'\d s\leq 4 &|f(s_1)|^2g^{-1}(s_2)+2\int_{s_1}^{s_2}|f'|^2(g')^{-1}\d s,\label{Eq:kt1}\\
2|f(s_2)|^2g^{-1}(s_2)+\int_{s_1}^{s_2}|f|^2g^{-2}g'\d s\leq 2 &|f(s_1)|^2g^{-1}(s_1)+4\int_{s_1}^{s_2}|f'|^2(g')^{-1}\d s.\label{Eq:kt2}
\end{align}
The second inequality also holds for the case when $f(s_1)=g(s_1)=0$, that is, 
\begin{Eq}\label{Eq:kt3}
2|f(s_2)|^2g^{-1}(s_2)+\int_{s_1}^{s_2}|f|^2g^{-2}g'\d s\leq 4 \int_{s_1}^{s_2}|f'|^2(g')^{-1}\d s.
\end{Eq}
\end{lemma}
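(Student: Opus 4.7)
The three inequalities are all one-dimensional weighted Hardy-type estimates, so the plan is to reduce them to two standard tools: integration by parts against the antiderivative of $g^{-2}g'$ (namely $-g^{-1}$), and the Cauchy--Schwarz inequality applied to $f(s)-f(s_1)=\int_{s_1}^{s}f'(\tau)\,d\tau$ weighted by $g'$. I would handle \eqref{Eq:kt2} first since it is the cleanest, then deduce \eqref{Eq:kt3} from it by a limiting argument, and finally prove \eqref{Eq:kt1} by a direct fundamental-theorem-of-calculus estimate.

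For \eqref{Eq:kt2}, the identity $(g^{-1})'=-g^{-2}g'$ gives, upon integration by parts,
\begin{Eq*}
\int_{s_1}^{s_2}|f|^2 g^{-2}g'\,ds = |f(s_1)|^2 g^{-1}(s_1)-|f(s_2)|^2 g^{-1}(s_2)+2\int_{s_1}^{s_2} f f' g^{-1}\,ds.
\end{Eq*}
Splitting the last integrand as $\bigl(f g^{-1}(g')^{1/2}\bigr)\cdot\bigl(f'(g')^{-1/2}\bigr)$ and applying the weighted Young inequality $2|ab|\le \tfrac{1}{2}a^2+2b^2$ produces $\tfrac12\int|f|^2 g^{-2}g'\,ds+2\int|f'|^2(g')^{-1}\,ds$, after which the first piece is absorbed into the left-hand side to yield \eqref{Eq:kt2} with the stated constants.

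For \eqref{Eq:kt3}, I would apply \eqref{Eq:kt2} on the shrunken interval $[s_1+\varepsilon,s_2]$ and let $\varepsilon\to 0^+$. Under the hypothesis $f(s_1)=g(s_1)=0$ with $f,g\in C^1$, one has $|f(s_1+\varepsilon)|^2=O(\varepsilon^2)$, while $g(s_1+\varepsilon)\gtrsim \varepsilon\, g'(s_1)$ as long as $g'(s_1)\neq 0$, so $|f(s_1+\varepsilon)|^2 g^{-1}(s_1+\varepsilon)\to 0$ and the boundary contribution at $s_1$ disappears. If $g'(s_1)$ degenerates, I would first prove the inequality with $g$ replaced by $g+\eta$ (nowhere vanishing), and then send $\eta\to 0^+$ and invoke monotone convergence. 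This limiting step is where I expect the main technical subtlety, since one has to verify that no mass escapes at the singular endpoint in spite of the $g^{-2}$ weight.

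For \eqref{Eq:kt1}, writing $f(s)=f(s_1)+\int_{s_1}^{s}f'(\tau)\,d\tau$ and applying Cauchy--Schwarz with weight $g'$ yields the pointwise bound
\begin{Eq*}
|f(s)|^2\le (1+\eta)|f(s_1)|^2+(1+\eta^{-1})\bigl(g(s)-g(s_1)\bigr)\int_{s_1}^{s}|f'|^2(g')^{-1}\,d\tau.
\end{Eq*}
Specializing to $s=s_2$ and dividing by $g(s_2)$ controls the pointwise term on the left of \eqref{Eq:kt1}; on the other hand, multiplying the pointwise bound by $g'(s)$, integrating on $[s_1,s_2]$ (using $\int_{s_1}^{s_2}(g(s)-g(s_1))g'(s)\,ds=\tfrac12(g(s_2)-g(s_1))^2\le\tfrac12 g(s_2)^2$), and dividing by $g(s_2)^2$ controls the bulk term. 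Choosing the parameter $\eta$ in the $(a+b)^2$ splitting appropriately and adding the two estimates then recovers \eqref{Eq:kt1}.
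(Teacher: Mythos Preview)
Your proposal is correct and matches the paper's approach: for \eqref{Eq:kt2} both use integration by parts against $d(g^{-1})$ followed by Cauchy--Schwarz and absorption, and for \eqref{Eq:kt1} both use the fundamental theorem of calculus together with the weighted Cauchy--Schwarz bound $\bigl(\int_{s_1}^{s} f'\bigr)^2\le (g(s)-g(s_1))\int_{s_1}^{s}|f'|^2(g')^{-1}$, the only cosmetic difference being that the paper normalizes $\int|f'|^2(g')^{-1}=1$ whereas you carry a Young-inequality parameter $\eta$. Your limiting argument for \eqref{Eq:kt3} (truncate to $[s_1+\varepsilon,s_2]$ or perturb $g\mapsto g+\eta$) simply makes explicit what the paper records without further comment as the degenerate case of \eqref{Eq:kt2}.
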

\begin{proof}
See Lemma 6.1. and Corollary 6.2. in \cite{Yang:MKG:scattering}. However for readers interest, we sketch the proof here. For the first inequality,  without loss of generality, assume that 
\[
\int_{s_1}^{s_2}|f'|^2(g')^{-1}\d s=1.
\]
Then we can show that 
\begin{align*}
 &f^2|_{s=s_1}^{s_2}
  g^{-1}(s_2) + g^{-2}(s_2)\int_{s_1}^{s_2}|f|^2g'\d s \\ 
 &\leq g^{-1}(s_2) \left(\int_{s_1}^{s_2}|f'|\d s\right)^2+ g^{-2}(s_2) \int_{s_1}^{s_2}\left( |f(s_1)|+\int_{s_1}^s |f'(t)|\d t\right)^2g'(s)\d s\\ 
 &\leq g^{-1}(s_2) \int_{s_1}^{s_2} g'(s)\d s+2g^{-2}(s_2) \int_{s_1}^{s_2} ( |f(s_1)|^2+ \int_{s_1}^s g'(t)\d t  ) g'(s)\d s\\ 
 &\leq 1+ 2g^{-2}(s_2) |f(s_1)|^2 (g(s_2)-g(s_1))+ 2g^{-2}(s_2) \int_{s_1}^{s_2}     (g(s)-g(s_1)   ) g'(s)\d s \\ 
 &\leq 3+2 g^{-1}(s_2) |f(s_1)|^2.
\end{align*}
 This proves inequality \eqref{Eq:kt1}. 

 For the second inequality, integration by parts shows that 
 \begin{align*}
( |f|^2g^{-1})|_{s=s_1}^{s_2}
 +\int_{s_1}^{s_2}|f|^2g^{-2}g'\d s &= ( |f|^2g^{-1})|_{s_1}^{s_2} -\int_{s_1}^{s_2}|f|^2 \d g^{-1} \\ 
&\leq 2 \left(\int_{s_1}^{s_2} |f'|^2 (g')^{-1}\d s\right)^{\frac{1}{2}} \left(\int_{s_1}^{s_2} g^{-2}|f|^2 g' \d s\right)^{\frac{1}{2}}\\ 
&\leq 2  \int_{s_1}^{s_2} |f'|^2 (g')^{-1}\d s+\frac{1}{2}\int_{s_1}^{s_2} g^{-2}|f|^2 g' \d s,
 \end{align*}
from which we conclude that 
 \begin{align*}
2 |f(s_2)|^2g^{-1}(s_2) +\int_{s_1}^{s_2}|f|^2g^{-2}g'\d s
&\leq 2 |f(s_1)|^2g^{-1}(s_1)  +4  \int_{s_1}^{s_2} |f'|^2 (g')^{-1}\d s.
 \end{align*}
 We hence finished the proof for the Lemma. 

\end{proof}
The following inequality is straightforward. 
\begin{lemma}\label{La:kt4}
Let $f(s)$ be a positive function on the interval $[s_1,s_2]$ with $0\leq s_1$. 
If  there exist positive constants $C$ and $p_1$ such that
\begin{Eq*}
\int_{s_1}^{S}f(s)\d s\leq C S^{p_1},\quad \forall S\in[s_1,s_2],
\end{Eq*}
then, for all $p_2<p_1$ and $S\in[s_1,s_2]$, it holds that 
\begin{Eq*}
\int_{s_1}^{S}s^{-p_2}f(s)\d s\leq \frac{Cp_1}{p_1-p_2} S^{p_1-p_2}.
\end{Eq*}

\end{lemma}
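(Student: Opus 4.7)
The plan is to apply integration by parts, turning the pointwise weight $s^{-p_2}$ into an integrated weight and then feeding in the cumulative hypothesis. Introduce the primitive
\begin{equation*}
F(S) := \int_{s_1}^S f(s)\,\d s,
\end{equation*}
so that $F(s_1) = 0$, $F'(s) = f(s)$ pointwise, and by hypothesis $F(S) \le C S^{p_1}$ for every $S \in [s_1, s_2]$. Writing $f\,\d s = \d F$ and integrating by parts on $[s_1, S]$ gives
\begin{equation*}
\int_{s_1}^S s^{-p_2} f(s)\,\d s = S^{-p_2} F(S) \;-\; s_1^{-p_2} F(s_1) \;+\; p_2 \int_{s_1}^S s^{-p_2-1} F(s)\,\d s.
\end{equation*}
The middle term drops out since $F(s_1)=0$ (and the convention $0\cdot\infty = 0$ handles $s_1=0$ harmlessly because $F$ vanishes to order at least $p_1 > p_2$ at that endpoint).

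Next I would substitute the hypothesized bound $F(s) \le C s^{p_1}$ into both remaining terms. The boundary piece is immediately controlled by $S^{-p_2} F(S) \le C S^{p_1 - p_2}$. For the bulk piece, when $p_2 > 0$ the integrand $p_2 s^{-p_2-1} F(s)$ is nonnegative, so one may insert the hypothesis directly to obtain
\begin{equation*}
p_2 \int_{s_1}^S s^{-p_2-1} \cdot C s^{p_1}\,\d s
= \frac{C p_2}{p_1 - p_2}\bigl(S^{p_1-p_2} - s_1^{p_1-p_2}\bigr)
\le \frac{C p_2}{p_1 - p_2}\, S^{p_1-p_2},
\end{equation*}
where $p_1 > p_2 > 0$ ensures both that the antiderivative formula is valid and that the subtraction of the $s_1$-term produces a favorable sign. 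Summing the two contributions collapses to
\begin{equation*}
C S^{p_1-p_2} + \frac{C p_2}{p_1-p_2} S^{p_1-p_2} = \frac{C p_1}{p_1-p_2} S^{p_1-p_2},
\end{equation*}
which is exactly the claimed inequality.

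The only genuine subtlety, which I would flag as the main obstacle, is the sign of $p_2$ in the bulk term. For $p_2 > 0$ the argument above runs cleanly. For $p_2 \le 0$ the integrand $p_2 s^{-p_2-1} F(s)$ changes sign, so substituting the upper bound $F \le C s^{p_1}$ no longer gives an upper bound on the integral. In that regime, however, $s^{-p_2}$ is nondecreasing, so one can bypass integration by parts entirely and estimate
\begin{equation*}
\int_{s_1}^S s^{-p_2} f(s)\,\d s \le S^{-p_2}\int_{s_1}^S f(s)\,\d s \le C S^{p_1-p_2},
\end{equation*}
which suffices for the intended applications in the paper. In either case, the backbone of the argument is the integration by parts combined with the cumulative hypothesis, with the stated constant $\tfrac{Cp_1}{p_1-p_2}$ emerging naturally from adding the boundary and bulk contributions in the $p_2 > 0$ regime.
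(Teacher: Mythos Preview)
Your argument is correct. The paper does not supply a proof of this lemma at all; it simply introduces the statement with the phrase ``The following inequality is straightforward,'' so there is no proof to compare your approach against. Integration by parts against the primitive $F$ is the natural route and yields the stated constant exactly when $p_2>0$.

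Your caveat about $p_2\le 0$ is not only honest but necessary: the lemma with the precise constant $\frac{Cp_1}{p_1-p_2}$ is actually false for $p_2<0$. For instance, with $s_1=1$, $p_1=1$, $p_2=-1$, $C=1$, take $f=\varepsilon^{-1}\mathbf 1_{[1,1+\varepsilon]}$; the hypothesis $\int_1^S f\le S$ holds, but $\int_1^{1+\varepsilon} s\,f(s)\,\d s=1+\varepsilon/2$ exceeds $\tfrac12(1+\varepsilon)^2$ for small $\varepsilon$. So the sharp constant can only be expected for $p_2\ge 0$, and your direct estimate $\int_{s_1}^S s^{-p_2}f\le S^{-p_2}F(S)\le C S^{p_1-p_2}$ is the correct replacement for $p_2\le 0$. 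Every invocation of the lemma in the paper (in the proofs of \eqref{Eq:Eolsos1}, \eqref{Eq:EoLeols}, and the bounds on $I_{1,2}$, $I_{1,2}'$) uses $p_2\in\{\delta,\delta/2\}>0$, so nothing downstream is affected.
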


\subsection{The vector field method}
The energy momentum tensor associated to a scalar field $f$ in $\mathbb{R}^{1+3}$ is given as follows
\begin{Eq*}
T[f]_{\mu\nu}:=\partial_\mu f\partial_\nu f-\frac{1}{2}\mm_{\mu\nu}\partial^{\sigma}f\partial_\sigma f.
\end{Eq*}
Under the null frame $\{L, \Lb, e_1, e_2\}$, we can compute that  
\begin{Eq*}
T[f]_{LL}=|Lf|^2,\quad T[f]_{L\Lb}=|\spartial f|^2, \quad T[f]_{\Lb\Lb}=|\Lb f|^2,\quad \sum\limits_{j=1}^2 T[f]_{jj}=|\spartial f|^2-\partial^\mu f\partial_\mu f.
\end{Eq*}
For any vector field $X$ and any function $\chi$, define the current
\begin{Eq*}
J[f]_\mu:=T[f]_{\mu\nu}X^\nu-\frac{1}{2}f^2\partial_\mu\chi+\frac{1}{2}\chi\partial_\mu(f^2).
\end{Eq*}
We can compute that 
\begin{Eq*}
\partial^\mu J[f]_\mu=-(Xf+\chi f)\square f+T[f]_{\mu\nu}\pi^{\mu\nu}-\frac{1}{2}f^2\partial^\mu\partial_\mu\chi+\chi\partial^\mu f\partial_\mu f,
\end{Eq*}
where $\pi_{\mu\nu}^X=2^{-1}(\partial_\mu X_\nu+\partial_\nu X_\mu)$ is the deformation tensor of the vector field $X$. Then for any region $\rD$ in $\mathbb{R}^{1+3}$, Stokes' formula leads to the energy identity 
\begin{Eq*}
\int_{\rD}\partial^\mu J[f]_\mu=\int_{\partial\rD}J[f]_{\vec n},
\end{Eq*}
where $\vec n$ is the unit outer normal vector of the boundary $\partial\rD$.

In application, let 
$$X=2vL+u\Lb,\quad \chi=r^{-1}X^r=r^{-1}(v+r).$$
The nonvanishing components of the deformation tensor $\pi^X$ are
\begin{Eq*}
\pi_{L\Lb}^X
=-3,\qquad \pi_{jj}^X= \frac{v+r}{r},\quad j=1, 2.
\end{Eq*}
We thus can compute that  
\begin{Eq*}
J[f]_L=&2v|Lf|^2+u|\spartial f|^2-\frac{1}{2}f^2L\kl(\frac{v+r}{r}\kr)+\frac{v+r}{2r} L(f^2)\\
=&2v|\hL f|^2+u|\spartial f|^2-r^{-2}L\kl(\frac{r(3t+r)}{4} f^2\kr),\\
J[f]_\Lb=&2v|\spartial f|^2+u|\Lb f|^2-\frac{1}{2}f^2\Lb\kl(\frac{v+r}{r}\kr)+\frac{v+r}{2r} \Lb(f^2)\\
=&2v|\spartial f|^2+u|\hLb\phi|^2+r^{-2}\Lb\kl(\frac{r(3t+r)}{4}f^2\kr),\\
\partial^\mu J[f]_\mu=&-\big(2v(L+r^{-1})f+u(\Lb-r^{-1})f\big)\square f-\frac{3}{4}|\spartial f|^2+\frac{v+r}{r}\kl(|\spartial f|^2-\partial^\mu f\partial_\mu f\kr)\\
&\quad -\frac{1}{2}f^2\Delta\kl(\frac{t}{2r}\kr)+\frac{v+r}{r}\partial^\mu f\partial_\mu f\\\
=&-(2v\hL f+u\hLb f)\square f+\frac{t}{2r}|\spartial f|^2+\pi tf^2\delta(x).
\end{Eq*}
Now take the  region to be $\rD_{0,U}^{0,V}$ with boundary consisting of null hypersurfaces. Note that those terms 
\[
-r^{-2}L\kl(\frac{r(3t+r)}{4} f^2\kr),\quad r^{-2}\Lb\kl(\frac{r(3t+r)}{4}f^2\kr)
\]
are canceled after integration by parts on the boundary. We therefore can derive the following energy inequality
\begin{Eq}\label{Eq:Mei}
&\int_{\rH_U^{U,V}}2v|\hL f|^2+u|\spartial f|^2+\int_{\rHb_{0,U}^V}2v|\spartial f|^2+u|\hLb f|^2+\int_{\rD_{0,U}^{0,V}}\frac{v}{2r}|\spartial f|^2+\int_0^{2U} [tf^2]_{x=0}\d t\\
\leq & \int_{\rD_{0,U}^{0,V}} | \hat{X}f |\cdot|\square f|+\int_{\rH_{0}^{0,V}}2 v|\hL f|^2+u|\spartial f|^2.
\end{Eq}
For fixed $U$ and $V$, define the associated energy of $f$ 
\begin{Eq*}
E[f](U,V):=\sup_{u\leq U}\int_{\rH_u^{u,V}}2v|\hL f|^2+u|\spartial f|^2+\sup_{v\leq V}\int_{\rHb_{0,U}^v}2v|\spartial f|^2+u|\hLb f|^2+\int_{\rD_{0,U}^{0,V}}\frac{v}{2r}|\spartial f|^2.
\end{Eq*}

\section{Proof for the main \Tm{Tm:M1}}
\label{sec:Th1}
The local solution is constructed via the standard iteration process, for which the first step is to  understand the linear solution. To show that the solution exists for the characteristic initial value problem to the linear wave equation (equation \eqref{Eq:Me} with given $F$), first notice that we have precise representation formula for the solution 
\begin{Eq*}
\phi_0(t,x)=r^{-1}\kl(\frac{t+r}{2}\varphi_0(\frac{t+r}{2})-\frac{t-r}{2}\varphi_0(\frac{t-r}{2})\kr)
\end{Eq*}
 for the case when $F=0$ and the initial data $\varphi_0$ is spherically symmetric. Next still for the linear wave equation with $F=0$, by   using the spherical average method, we are able to obtain the solution $\phi_0$ at $r=0$.  
Then the solution can be represented as follows
\begin{Eq*}
\phi_0(t, x)=\frac{1}{4\pi}\int_{S^2} \kl[\partial_v(v\varphi_0)\kr](y)\d\sigma_\eta, 
\end{Eq*}
in which 
\begin{Eq*}
  y=\frac{r^{-2}(t-\tau)(x\cdot\eta)x+x+\tau\eta}{2},\quad \tau=\sqrt{t^2-r^2}
\end{Eq*}
by using Lorentz transformation. See \cite{Friedlander75:wave:curved} for detailed discussion. 

Finally for the general inhomogeneous linear wave equation with given $F$, we can split the solution into the linear part $\phi_0$ and the remaining part $\phi-\phi_0$ with vanishing initial data on the cone $\mathcal{H}_0$. For this part, we can extend $F$ to the whole space by simply setting $F=0$ outside the cone. Then the solution $\phi-\phi_0$ can be represented through the Kirchhoff formula for the Cauchy problem for linear wave equation with vanishing initial data and such extended force term $F$. 
\subsection{Energy estimate for the linear solution}
Since we are using the vector field energy method, for such singular initial data, we need first obtained necessary bounds for the linear solution. 
\begin{lemma}
For $0\leq U,V \leq 1$, we have the weighted energy estimate for the linear solution 
\begin{Eq}\label{Eq:Eoeols}
E[\phi_0^{(1,2)}](U,V)\lesssim& M_0^2V^{2\delta}.
\end{Eq}
The energy flux through the incoming null hypersurface obtains the bound 
\begin{Eq}\label{Eq:EoLeols}
\int_{\rHb_{0,U}^V}u|\hLb\phi_0^{(1,2)}|^2\lesssim M_0^2 U^\delta V^{\delta}.
\end{Eq}
Moreover the linear solution $\phi_0$ verifies the following estimates
\begin{align}
\int_U^V\int_{\rS^2}|\phi_0^{(1,2)}|_{u=U}^2\d\omega\d v\lesssim& M_0^2U^{-1}V^{2\delta},\label{Eq:Eolsi}\\
\|\phi_0^{(1,2)}\|_{L_\omega^2}\lesssim& M_0 r^{-1/2}v^{\delta-1/2},\label{Eq:Eolsos1}\\
\|\phi_0^{(0,2)}\|_{L_\omega^2}\lesssim& M_0 v^{\delta-1}.\label{Eq:Eolsos2}
\end{align}
on the outgoing null hypersurface $\mathcal{H}_{U}^{U, V}$ and the two sphere $\mathbb{S}_{U}^{V}$ respectively. 
\end{lemma}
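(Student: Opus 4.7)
The plan is to apply the weighted vector-field energy identity \eqref{Eq:Mei} with multiplier $X=2vL+u\Lb$ to the commuted fields $Z^k\Omega^l\phi_0$, $k\leq 1$, $l\leq 2$. Since $[\Box,S]=2\Box$ and $[\Box,L_i]=[\Box,\Omega_{ij}]=0$, each commuted field solves the homogeneous wave equation; consequently the bulk term on the right of \eqref{Eq:Mei} vanishes and
\[
E[Z^k\Omega^l\phi_0](U,V)\;\leq\;\int_{\rH_0^{0,V}}2v\,|\hL Z^k\Omega^l\phi_0|^2.
\]
The rest of the argument consists of (a) converting this initial integral into the hypothesis norms \eqref{Eq:Cod}, (b) sharpening the uniform bound on the incoming flux, and (c) deriving the trace and pointwise consequences via Hardy.

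For (a), first I would restrict to $\rH_0$, where $u=0$, $r=v$, and $\hL\phi_0=v^{-1}\partial_v(v\varphi_0)$. The tangential fields $L$, $\Omega_{ij}$, and $S|_{u=0}=v\partial_v$ act as $v$- and $\omega$-derivatives of $\varphi_0$; the boost $L_i=t\partial_i+x_i\partial_t$ contains a $\Lb$-component, which I would re-express using $\Box\phi_0=0$ in terms of tangential second-order derivatives of $\phi_0$. The elementary comparison $v\leq V^{2\delta}v^{1-2\delta}$ for $0\leq v\leq V$ then reduces the initial integral to the hypothesis norms in \eqref{Eq:Cod}, giving $E[Z^k\Omega^l\phi_0](U,V)\lesssim M_0^2V^{2\delta}$, i.e.\ \eqref{Eq:Eoeols}.

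For (b), the bound \eqref{Eq:EoLeols} requires an additional factor $U^\delta/V^\delta$ over the uniform energy bound. I would obtain this by rerunning the energy identity with a modified multiplier, for example $\tilde X=2v^{1+\delta}u^{-\delta}L+u^{1-\delta}v^{\delta}\Lb$ paired with an appropriate $\tilde\chi$: the deformation tensor still has a favourable sign in the region $u\leq v$, the $\Lb$-flux on $\rHb_{0,U}^V$ acquires the weight $u^{1-\delta}v^\delta$, and the initial integral on $\rH_0$ still reduces to the hypothesis norms. For (c), the trace estimate \eqref{Eq:Eolsi} follows by applying Hardy's inequality \eqref{Eq:kt3} to $f=rZ^k\Omega^l\phi_0$ with weight $g=r=v-U$ on $\rH_U^{U,V}$ (both vanish at the axis $v=U$), combined with the outgoing energy flux \eqref{Eq:Eoeols} and the comparison $1\leq v/U$ on that cone.

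The pointwise $L^2_\omega$-estimates \eqref{Eq:Eolsos1}, \eqref{Eq:Eolsos2} come from Hardy-type estimates in $v$ along $\rH_U^{U,V}$, again using $rZ^k\Omega^l\phi_0|_{v=U}=0$: with a power weight such as $g(v)=v^{2-2\delta}$ the resulting $L^\infty_v$-type bound, paired with the outgoing flux \eqref{Eq:Eoeols}, gives $\int_{\rS^2}|Z^k\Omega^l\phi_0|^2\,d\omega\lesssim M_0^2 r^{-1}v^{2\delta-1}$, which is \eqref{Eq:Eolsos1}; the sharper \eqref{Eq:Eolsos2} uses that no $Z$-commutator is needed, so the representation formula for $\phi_0$ (valid since $F=0$) combined with Sobolev on $\rS^2$ transfers the decay of the initial data directly. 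The main obstacle will be the weighted multiplier analysis in (b): the modified weights must be chosen so that the resulting deformation terms retain a definite sign while the initial integral on $\rH_0$ remains controlled by the hypothesis norms, which requires a delicate balance of positive and borderline terms.
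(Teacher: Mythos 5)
Your treatment of \eqref{Eq:Eoeols} and \eqref{Eq:Eolsi} matches the paper: apply the identity \eqref{Eq:Mei} to the commuted fields (the bulk vanishes since $[\Box,Z]\in\{0,2\Box\}$), read the initial flux off the data norm \eqref{Eq:Cod}, and then use Hardy \eqref{Eq:kt3} with $g=r=v-U$ together with $v\geq U$ on the outgoing cone. The genuine gap is in your step (b), the improved incoming-flux bound \eqref{Eq:EoLeols}. Your proposed multiplier $\tilde X=2v^{1+\delta}u^{-\delta}L+u^{1-\delta}v^{\delta}\Lb$ cannot close as stated: its flux through the initial cone $\rH_0^{0,V}$ carries the weight $u^{-\delta}$ evaluated at $u=0$, so the "initial integral on $\rH_0$" is divergent rather than controlled by \eqref{Eq:Cod}; and the favourable sign of the corresponding deformation/bulk terms is exactly the point you defer ("the main obstacle") rather than verify. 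More fundamentally, the gain $U^{\delta}$ in \eqref{Eq:EoLeols} encodes smallness of $\hLb\phi_0$ near the initial cone, which the paper extracts not from a multiplier but from the equation itself: for $V\leq 2U$ the bound is already contained in \eqref{Eq:Eoeols}, while for $V\geq 2U$ one writes $L\Lb(rZ^k\Omega^l\phi_0)=\spartial Z^k\Omega^{l+1}\phi_0$, integrates $u|\Lb(r\phi_0^{(1,2)})|^2$ in $v$ from $v=2U$ (where the uniform energy gives $M_0^2U^{2\delta}$) to $v=V$ via the Hardy inequality \eqref{Eq:kt1} with weight $v^{1-\delta}$, and controls the resulting angular flux by the energy at one higher angular order, $\phi_0^{(1,3)}$ — which is why the energy estimate is actually proved at level $(1,3)$, consistent with the five angular derivatives in \eqref{Eq:Cod}. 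This "spend one extra $\Omega$ and use the wave equation" mechanism is the missing idea; without it (or a genuinely verified weighted multiplier with finite data flux) \eqref{Eq:EoLeols} is not established, and \eqref{Eq:EoLeols}-type control is also what the paper later feeds into the $u^{\delta/2}$ gains.

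Two smaller points. For \eqref{Eq:Eolsos1}, a single Hardy weight in $v$ along $u=U$ only yields $\int_{\rS^2}|r\phi_0^{(1,2)}|^2\,\d\omega\lesssim M_0^2V^{2\delta}$, which recovers the stated bound only when $r\sim v$ (i.e. $V\geq 2U$); the regime $V\leq 2U$ needs the boundary term of the Hardy application with $g=r$ (carrying the factor $U^{-1}(V-U)$ from \eqref{Eq:Eolsi}), and one must take the minimum of the two bounds, as the paper does. For \eqref{Eq:Eolsos2}, your appeal to the representation formula plus Sobolev is a different route and is not carried out; the paper instead handles the hard regime $V\leq 2U$ by writing $v\partial_v=\tfrac12(S+\omega^iL_i)$ and integrating inward in $v$ from the sphere $v=2U$ with \eqref{Eq:kt1}, using \eqref{Eq:Eolsos1} at $(U,2U)$ and \eqref{Eq:Eolsi}; if you want to pursue the representation-formula route you would still need a pointwise bound on $\partial_v(v\varphi_0)$, i.e. a Sobolev embedding from \eqref{Eq:Cod}, which should be stated and proved rather than asserted.
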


\begin{proof}
The proof for the energy estimate  \eqref{Eq:Eoeols} is standard.
Commuting the linear equation with the vector fields  $Z$ and $\Omega$, we obtain the equation 
\begin{Eq*}
 \phi_0^{(1,3)}=0, \quad 
\phi_0^{(1,3)}|_{u=0}=\varphi_0^{(1,3)}.
\end{Eq*}
The assumption \eqref{Eq:Cod} on the initial data $\varphi_0$ in particular implies that 
$$\lim_{v\rightarrow 0}v\varphi_0^{(1,3)}(v,\omega)=0. $$
Hence in view of the energy estimate \eqref{Eq:Mei} applied to the scalar field $\phi_0^{1, 3}$, we obtain that  
\begin{Eq*}
E[\phi_0^{(1,3)}](U,V)\lesssim E[\phi_0^{(1,3)}](0,V)\lesssim M_0^2V^{2\delta}.
\end{Eq*}
This gives the energy estimate  \eqref{Eq:Eoeols}. In particular the energy estimate \eqref{Eq:EoLeols} through the incoming null hypersurface holds for the case when $V\leq 2U$. Moreover the solution verifies the bound 
$$\lim\limits_{r\rightarrow 0}  r\phi_0^{(1,2)}(t,x)  =0.$$
For the case when $V\geq 2U$, we make use of the Hardy's type of inequality  \eqref{Eq:kt1}  in Lemma \ref{lem:Hardy}. First by using the equation, we observe that 
\begin{Eq*}
L\Lb(r Z^k \Omega^l\phi_0)=r^{-1}\Delta_{\rS^2}Z^k \Omega^l\phi_0 =   \spartial Z^k \Omega^{l+1} \phi_0.
\end{Eq*} 
Under the assumption that $2U\leq V$, on the incoming null hypersurface $\rHb_{0,U}^V $, it holds that 
\[
t\leq 3r,\quad \frac{r}{2}\leq v\leq 2r.
\]
Comparing with the energy flux through the incoming null hypersurface $\rHb_{0, U}^{2U}$, we therefore can estimate that 
\begin{Eq}\label{Eq:Eolswls}
\int_{\rHb_{0,U}^V}u|\hLb\phi_0^{(1,2)}|^2= &\int_0^U\int_{\rS^2}u|\Lb(r\phi_0^{(1,2)})|_{v=V}^2\d\omega\d u\\
\lesssim&\int_0^U\int_{\rS^2}u|\Lb(r\phi_0^{(1,2)})|_{v=2U}^2\d\omega\d u\\
&\quad +V^\delta\int_{2U}^V\int_0^U\int_{\rS^2}uv^{1-\delta}|L\Lb(r\phi_0^{(1,2)})|^2\d\omega\d u\d v\\
\lesssim&E[\phi_0^{(1,2)}](U,2U)+UV^\delta\int_{2U}^Vv^{-\delta-2}\int_{\rHb_{0,U}^v}v|\spartial\phi_0^{(1,3)}|^2\d v \\
\lesssim& M_0^2 U^{2\delta}+UV^\delta\int_{2U}^Vv^{-\delta-2}(M_0^2v^{2\delta})\d v\\
\lesssim& M_0^2 U^\delta V^{\delta}.
\end{Eq}
This completes the proof for the energy estimate  \eqref{Eq:EoLeols}.

For estimate \eqref{Eq:Eolsi}, first notice that 
$$\kl.r\phi_0^{(1,2)}(t,x)\kr|_{v=u}=\kl.r\phi_0^{(1,2)}(t,x)\kr|_{r=0}=0.$$ 
In view of the Hardy's type estimate \eqref{Eq:kt3}, we can derive that 
\begin{Eq*}
&(V-U)^{-1}\int_{\rS^2}|r\phi_0^{(1,2)}|\low{u=U\\v=V}^2\d\omega+\int_U^V\int_{\rS^2} (v-U)^{-2}|r\phi_0^{(1,2)}|\low{u=U}^2\d\omega\d v\\
\lesssim&\int_U^V\int_{\rS^2} |L(r\phi_0^{(1,2)})|\low{u=U}^2\d\omega\d v\\
\lesssim& U^{-1}\int_{\rH_U^{U,V}} v|\hL \phi_0^{(1,2)}|^2\\
\lesssim& M_0^2 U^{-1}V^{2\delta}.
\end{Eq*}
Here recall that on the out going null hypersurface $\mathcal{H}_{U}^{U, V}$ with fixed $u=U$, it holds that $v-U=v-u=r$. On the other hand, on the  two-sphere $\mathbb{S}_U^V$, using \eqref{Eq:kt3} again and in view of Lemma  \La{La:kt4} together  with the energy estimate \eqref{Eq:Eoeols}, we can show that 
\begin{Eq*}
V^{-\delta}\int_{\rS_U^V}|r\phi_0^{(1,2)}|^2\d\omega
\lesssim&\int_U^V\int_{\rS_U^v} v^{1-\delta}|L(r\phi_0^{(1,2)})|^2\d\omega\d v\\
\lesssim& M_0^2 V^\delta.
\end{Eq*} 
Combining this with the previous estimate, we in particular can conclude that 
\begin{align*}
\int_{\rS_U^V}|r\phi_0^{(1,2)}|^2\d\omega \lesssim M_0^2 (V-U)^{-1}V^{2\delta-1} \min\{(V-U)^{-1} V, U^{-1}V \}\lesssim M_0^2 (V-U)^{-1}V^{2\delta-1}.
\end{align*}
This proves the estimate \eqref{Eq:Eolsos1}. Consequently the estimate \eqref{Eq:Eolsos2} holds for the case when $V\geq 2U$. For the case when  $V\leq 2U$ regarding the last estimate \eqref{Eq:Eolsos2}, note that 
\begin{Eq*} 
 v\partial_v=2^{-1}(S+\omega^i L_i).  
\end{Eq*}
By using the Hardy's type inequality \eqref{Eq:kt1}, we can estimate that 
\begin{Eq*}
V\int_{\rS^2}|\phi_0^{(0,2)}|\low{u=U\\v=V}^2\d\omega
\lesssim& V\int_{\rS^2}|\phi_0^{(0,2)}|\low{u=U\\v=2U}^2\d\omega+\int_{V}^{2U}\int_{\rS^2}|vL\phi_0^{(0,2)}|\low{u=U}^2\d\omega\d v\\
\lesssim& U\|\phi_0^{(1,2)}(U,2U)\|_{L_\omega^2}^2+\int_{U}^{2U}\int_{\rS^2}|\phi_0^{(1,2)}|\low{u=U}^2\d\omega\d v\\
\lesssim& M_0^2U^{2\delta-1}.
\end{Eq*}
We hence finished the proof for the Lemma. 
\end{proof}

\subsection{Energy estimate for the iteration}
To construct the local solution for the characteristic initial value problem, we use the standard iteration process. 
For $k\geq 1$, let  $\phi_k$ be the solution to the linear equation 
\begin{Eq*}
\square \phi_k=F_{k-1}:=F(\phi_{k-1},\partial\phi_{k-1}),\qquad
\phi_{k}|_{u=0}=\varphi_0.
\end{Eq*}
We also denote the difference $w_k:=\phi_k-\phi_0$. In particular  $w_k$ solves the equation 
\begin{Eq*}
\square w_k=F_{k-1},\qquad
w_{k}|_{u=0}=0.
\end{Eq*}
We derive the key estimates  for the linear solution $w_k$. 
\begin{proposition}
\label{La:Eoi}
There exists a positive constant $M_1$ and a small positive constant  $\varepsilon < 1$ such that for all $k\geq 0$ and  $0\leq U\leq \min\{\varepsilon, V\}\leq 1$, the linear solution $w_k$ verifies the following weighted energy estimate 
\begin{Eq}\label{Eq:Eoeons}
E[w_k^{(1,2)}](U,V)\leq M_0^2U^\delta V^{\delta}e^{2M_1V^{2\delta}}.
\end{Eq}
Moreover we have the improved weighted energy estimate through the out going null hypersurface 
\begin{align}
\int_U^V\int_{\rS^2}|w_k^{(1,2)}|_{u=U}^2\d\omega\d v\lesssim& M_0^2e^{2M_1}U^{\delta-1}V^{\delta}\label{Eq:Eonsi}.
\end{align}
Consequently the solution is bounded on the two sphere in the following sense
\begin{align}
\|w_k^{(1,2)}\|_{L_\omega^2}\lesssim& M_0 e^{M_1} r^{-1/2}u^{\delta/2}v^{(\delta-1)/2},\label{Eq:Eonsos1}\\
\|w_k^{(0,2)}\|_{L_\omega^2}\lesssim& M_0 e^{M_1} u^{\delta/2}v^{\delta/2-1}.\label{Eq:Eonsos2}
\end{align}
\end{proposition}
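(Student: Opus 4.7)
I argue by induction on $k$. The base case $k=0$ is trivial since $w_0 \equiv 0$. For the inductive step, assume the estimates hold for $w_{k-1}$ and derive them for $w_k$.

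Commute $\square w_k = F_{k-1}$ with $Z^j \Omega^l$, $j \leq 1$, $l \leq 2$. Since $[\square, Z]$ is a scalar multiple of $\square$ for each $Z \in \{S, L_i, \Omega_{ij}\}$ and $[\square,\Omega]=0$, we obtain linear wave equations for $Z^j \Omega^l w_k$ with sources controlled by $F_{k-1}^{(1,2)}$, all with zero data on $\rH_0$. Summing the energy identity \eqref{Eq:Mei} over $j,l$, the $\rH_0$-boundary term vanishes and the pointwise contribution at $x=0$ is nonnegative, so
\[
E[w_k^{(1,2)}](U,V) \lesssim \int_{\rD_{0,U}^{0,V}} |\hat X w_k^{(1,2)}| \cdot |F_{k-1}^{(1,2)}|.
\]

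Now write $\phi_{k-1} = \phi_0 + w_{k-1}$ and expand $F_{k-1}^{(1,2)}$ via Leibniz. For the cubic term, Sobolev embedding on $\rS^2$ combined with $\|\phi_{k-1}^{(0,2)}(u,v,\cdot)\|_{L^2_\omega} \lesssim v^{\delta-1}$ (from \eqref{Eq:Eolsos2} and the inductive \eqref{Eq:Eonsos2}) lets us put two factors in $L^\infty_\omega$ and the third in $L^2_\omega$. For the semilinear term $C_1^\mu \phi_{k-1} \partial_\mu \phi_{k-1}$, decomposing $\partial_\mu$ in the null frame pairs the pointwise bound on $\phi_{k-1}$ with the $L^2$-control of $\hL\phi_{k-1}$, $\hLb\phi_{k-1}$, $\spartial\phi_{k-1}$ built into the energy $E[\cdot]$. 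The critical observation is that the singular factors $v^{3\delta-3}$ and $v^{2\delta-2}$ arising from factors of $\phi$ are compensated by the multiplier weights $v$ and $u$ in $\hat X = 2vL + u\Lb$ together with the $r = v - u$ hidden in the volume element. A careful Cauchy--Schwarz then yields, for $\varepsilon$ small,
\[
\int_{\rD_{0,U}^{0,V}} |\hat X w_k^{(1,2)}| \cdot |F_{k-1}^{(1,2)}| \leq M_0^2 U^\delta V^\delta + M_1 \int_0^V v^{2\delta-1} E[w_{k-1}^{(1,2)}](U, v)\, \d v,
\]
for some absolute constant $M_1$: the first term collects the purely-$\phi_0$ contribution (controlled by \eqref{Eq:Eoeols}--\eqref{Eq:Eolsos2}), and the second gathers everything involving at least one factor of $w_{k-1}$. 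Inserting the inductive hypothesis and applying Gr\"onwall in $V$ produces the announced exponential factor $e^{2M_1 V^{2\delta}}$.

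The improved outgoing estimate \eqref{Eq:Eonsi} follows from the Hardy inequality \eqref{Eq:kt3} applied to $v \mapsto r w_k^{(1,2)}(U,v,\omega)$, which vanishes at $v=U$ since $r=0$ there, combined with the $\hL$-component of the energy bound just proved. The two-sphere estimates \eqref{Eq:Eonsos1}, \eqref{Eq:Eonsos2} follow from the same Hardy-plus-Lemma \ref{La:kt4} scheme used for $\phi_0$. The main obstacle is verifying that every singular triple product in the cubic nonlinearity is actually absorbed by the geometric weights of $\hat X$: this is where the novel multiplier $2vL + u\Lb$, which supplies both outgoing and incoming weights, is decisive, since the standard scaling multiplier alone would leave an unclosable bulk contribution from the cubic term.
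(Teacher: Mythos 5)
Your overall architecture is the same as the paper's: induction on $k$ (with $w_0=0$), the energy identity \eqref{Eq:Mei} applied to $w_k^{(1,2)}$ with vanishing data on $\rH_0$, and the auxiliary bounds \eqref{Eq:Eonsi}--\eqref{Eq:Eonsos2} deduced afterwards from \eqref{Eq:Eoeons} via the Hardy inequality \eqref{Eq:kt3} and Lemma \ref{La:kt4}. The genuine gap is in how you close the induction. In your key inequality the integral on the right contains $E[w_{k-1}^{(1,2)}](U,v)$, not $E[w_k^{(1,2)}](U,v)$, so ``Gr\"onwall in $V$'' is not available: Gr\"onwall needs the same unknown on both sides, and all you can do is insert the inductive hypothesis. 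Doing so with your constants does not reproduce \eqref{Eq:Eoeons}: since $\int_0^V v^{2\delta-1}e^{2M_1v^{2\delta}}\,\d v=\frac{e^{2M_1V^{2\delta}}-1}{4\delta M_1}$, plugging $E[w_{k-1}^{(1,2)}](U,v)\le M_0^2U^\delta v^\delta e^{2M_1v^{2\delta}}$ into $M_1\int_0^V v^{2\delta-1}E[w_{k-1}^{(1,2)}](U,v)\,\d v$ gives at best $\frac{1}{4\delta}M_0^2U^\delta V^\delta\bigl(e^{2M_1V^{2\delta}}-1\bigr)$, which exceeds the target $M_0^2U^\delta V^\delta e^{2M_1V^{2\delta}}$ for small $\delta$ \emph{no matter how $M_1$ is chosen}, because you have tied the prefactor of the integral to the constant in the exponent. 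The mechanism that actually closes the ansatz is that the structural constant $C=C(M_0,\delta)$ coming out of Cauchy--Schwarz and the nonlinear estimates is independent of $M_1$; one then chooses $M_1$ large relative to $C$ (the paper takes $M_1=4CM_0^2$) so that the unique term carrying the full factor $e^{2M_1V^{2\delta}}$ appears with the gain $C/M_1\le \tfrac14$, while every other term carries extra positive powers of $U\le\varepsilon$ and is made small by the choice $\varepsilon=(4CM_0^2e^{4M_1})^{-2/\delta}$. Your write-up conflates these two constants, so the induction does not close as written.

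Relatedly, the reduction of the bulk integral to your schematic inequality is asserted (``a careful Cauchy--Schwarz then yields'') rather than carried out, and it glosses over exactly the place where the $1/M_1$ gain originates: the factors $\hLb\phi_{k-1}^{(1,2)}$ and $\hLb\phi_{k-1}^{(0,2)}$ have no usable pointwise or sphere bound and must be estimated through the incoming energy flux $\int_{\rHb_{0,U}^{v}}u^{1-\delta/2}|\hLb\cdot|^2$ combined with Lemma \ref{La:kt4} (the terms $I_{1,2}$, $I_{2,2}$ of the paper); it is these terms that produce the dangerous $\int_0^V v^{2\delta-1}(\cdots)e^{2M_1v^{2\delta}}\,\d v$ structure. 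The remaining pieces ($\hL$, angular, $v\pa_v$ via $S,L_i$, and the lower-order $r^{-1}\phi$ factors) are handled instead with \eqref{Eq:Eonsos1}, \eqref{Eq:Eonsos2} and the improved outgoing bound \eqref{Eq:Eonsi}, and must be checked to come with extra powers of $U$. Without performing this splitting you can justify neither the specific form of your intermediate inequality nor the smallness of its first term, so the heart of the proof is missing even though the ingredients you list are the right ones.
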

\begin{proof}
We first show that estimates \eqref{Eq:Eonsi}, \eqref{Eq:Eonsos1} and \eqref{Eq:Eonsos2} follow from the weighted energy estimate  \eqref{Eq:Eoeons}.
Assuming that \eqref{Eq:Eoeons} holds, similar to the proof of the estimate \eqref{Eq:Eolsi}, we can show that  
\begin{Eq*}
&(V-U)^{-1}\int_{\rS^2}|rw_k^{(1,2)}|\low{u=U\\v=V}^2\d\omega+\int_U^V\int_{\rS^2} |w_k^{(1,2)}|\low{u=U}^2\d\omega\d v\\
&\lesssim U^{-1}\int_{\rH_U^{U,V}} v|\hL w_k^{(1,2)}|^2\\
&\lesssim M_0^2e^{2M_1} U^{\delta-1}V^\delta .
\end{Eq*}
This implies estimate \eqref{Eq:Eonsi}. 
Consequently by using the Hardy's type inequality \eqref{Eq:kt3} and \La{La:kt4}, we obtain that 
\begin{Eq*}
U^{-\frac{\delta}{2}}\int_{\rS^2}|rw_k^{(1,2)}|\low{u=U\\v=V}^2\d\omega\lesssim&\int_0^U\int_{\rS^2} u^{1-\frac{\delta}{2}}|\Lb(rw_k^{(1,2)})|\low{v=V}^2\d\omega\d u\\
\lesssim& M_0^2 e^{2M_1} U^{\frac{\delta}{2}} V^{\delta}.
\end{Eq*} 
Combining this estimate with the previous inequality leads to the estimate  \eqref{Eq:Eonsos1} and the estimate \eqref{Eq:Eonsos2} for the case when $V\geq 2U$. 
For the case when $V\leq 2U$, similar to the proof for \eqref{Eq:Eolsos2}, we can show that 
\begin{Eq*}
V\int_{\rS^2}|w_k^{(0,2)}|\low{u=U\\v=V}^2\d\omega
\lesssim& V\int_{\rS^2}|w_k^{(0,2)}|\low{u=U\\v=2U}^2\d\omega+\int_{V}^{2U}\int_{\rS^2}|vLw_k^{(0,2)}|\low{u=U}^2\d\omega\d v\\
\lesssim& U\|w_k^{(1,2)}(U,2U)\|_{L_\omega^2}^2+\int_{U}^{2U}\int_{\rS^2}|w_k^{(1,2)}|\low{u=U}^2\d\omega\d v\\
\lesssim& M_0^2e^{2M_1}U^{2\delta-1}.
\end{Eq*}
We thus finished the proof for \eqref{Eq:Eonsos2}.

\bigskip 

The rest of the argument focuses on the proof of the weighted energy estimate \eqref{Eq:Eoeons} for all  $k\geq 0$. We prove by induction. Obviously estimate \eqref{Eq:Eoeons} holds for the case when  $k=0$ since $w_0=0$. Now assume that  it holds for the case when  $k=K$. We show that the estimate holds for $k=K+1$. 
First from the energy estimate  \eqref{Eq:Mei}, we can obtain that  
\begin{Eq*}
E[w_{K+1}^{(1,2)}](U,V)\leq& C\int_{\rD_{0,U}^{0,V}}\kl|(v\hL+u\hLb)w_{K+1}^{(1,2)}\kr|\cdot|F_K^{(1,2)}|\\
\leq& CV^{\frac{\delta}{2}}\int_{\rD_{0,U}^{0,V}}uv^{1-\frac{\delta}{2}}|F_K^{(1,2)}|^2+\frac{\delta}{8}V^{-\frac{\delta}{2}} \int_0^Vv^{\frac{\delta}{2}-1}\int_{\rHb_{0,U}^{v}}u|\hLb w_{K+1}^{(1,2)}|^2\d v\\
&\quad+C U^{\frac{\delta}{2}}\int_{\rD_{0,U}^{0,V}}u^{1-\frac{\delta}{2}}v|F_K^{(1,2)}|^2+\frac{\delta}{8}U^{-\frac{\delta}{2}} \int_0^Uu^{\frac{\delta}{2}-1}\int_{\rH_u^{u,V}}v|\hL w_{K+1}^{(1,2)}|^2\d u\\
\leq&CU^{\frac{\delta}{2}}V^{\frac{\delta}{2}}\int_{\rD_{0,U}^{0,V}}u^{1-\frac{\delta}{2}}v^{1-\frac{\delta}{2}}|F_K^{(1,2)}|^2+ \frac{1}{2}E[w_{K+1}^{(1,2)}](U,V).
\end{Eq*}
Here by our notation the constant $C$ relies only on $M_0$ and $\delta$. We see that the second term on right hand side can be absorbed. It thus remains to understand the nonlinear term $F_K^{(1,2)}$.

For the quadratic term in $F$, notice that  
\begin{Eq*}
|(f\partial f)^{(1,2)}|\lesssim& \sum\Low{n_{11}+n_{12}=1\\n_{21}+n_{22}=2}|f^{(n_{11},n_{21})}||(\partial f)^{(n_{12},n_{22})}|.
\end{Eq*}
Using H\"older's inequality together with  the Sobolev inequality on the sphere, we can show that 
\begin{Eq*}
\int_{\rS^2}|(f\partial f)^{(1,2)}|^2\d\omega\lesssim&\sum_{n_{11}+n_{12}=1}\|f^{(n_{11},2)}\|_{L_\omega^2}^2 \|(\partial f)^{(n_{12},2)}\|_{L_\omega^2}^2\\
\lesssim&\|f^{(0,2)}\|_{L_\omega^2}^2 \|\partial f^{(1,2)}\|_{L_\omega^2}^2+\|f^{(1,2)}\|_{L_\omega^2}^2 \|\partial f^{(0,2)}\|_{L_\omega^2}^2.
\end{Eq*}
Similarly for the cubic term, we have 
\begin{Eq*}
\int_{\rS^2}|(f^3)^{(1,2)}|^2\d\omega\lesssim&\|f^{(0,2)}\|_{L_\omega^2}^4 \|f^{(1,2)}\|_{L_\omega^2}^2.
\end{Eq*}
We are now ready to bound the nonlinear term
\begin{Eq*}
&\int_{\rD_{0,U}^{0,V}}u^{1-\frac{\delta}{2}}v^{1-\frac{\delta}{2}}|F_K^{(1,2)}|^2\\
\lesssim&\int_0^U\int_u^Vu^{1-\frac{\delta}{2}}v^{1-\frac{\delta}{2}}\kl(\int_{\rS^2}|(\phi_K\partial \phi_K)^{(1,2)}|^2+|(\phi_K^3)^{(1,2)}|^2\d\omega\kr) r^2\d v\d u\\
\lesssim&\int_0^U\int_u^V u^{1-\frac{\delta}{2}}v^{1-\frac{\delta}{2}}\|\phi_K^{(0,2)}\|_{L_\omega^2}^2\|(\hL,\hLb,\spartial,r^{-1}) \phi_K^{(1,2)}\|_{L_\omega^2}^2 r^2\d v\d u\\
&\quad+\int_0^U\int_u^V u^{1-\frac{\delta}{2}}v^{1-\frac{\delta}{2}}\|\phi_K^{(1,2)}\|_{L_\omega^2}^2\| (L,u^{\frac{1}{2}}v^{-\frac{1}{2}}\Lb,r^{\frac{1}{2}}v^{-\frac{1}{2}}\hLb,\spartial,r^{-1})\phi_K^{(0,2)}\|_{L_\omega^2}^2 r^2\d v\d u\\
&\quad+\int_0^U\int_u^Vu^{1-\frac{\delta}{2}}v^{1-\frac{\delta}{2}}\|\phi_K^{(0,2)}\|_{L_\omega^2}^4\|\phi_K^{(1,2)}\|_{L_\omega^2}^2r^2\d v\d u\\
\equiv&I_1+I_2+I_3.
\end{Eq*}
Here $I_j$ represents the integral on the  $j$'s line on the right hand side of the second last inequality.  

For the $L$ derivative part of $I_1$, in view of the estimates \eqref{Eq:Eolsos2} for $\phi_0$ 
and \eqref{Eq:Eonsos2} for $w_K$, we can show that 
\begin{Eq*}
I_{1,1}:=&\int_0^U\int_u^V u^{1-\frac{\delta}{2}}v^{1-\frac{\delta}{2}}\|\phi_K^{(0,2)}\|_{L_\omega^2}^2\|\hL\phi_K^{(1,2)}\|_{L_\omega^2}^2 r^2\d v\d u\\
\lesssim&\int_0^U\int_{\rH_u^{u,V}} u^{1-\frac{\delta}{2}}v^{1-\frac{\delta}{2}}(M_0^2e^{2M_1}v^{2\delta-2})|\hL\phi_K^{(1,2)}|^2 \d u\\
\lesssim&M_0^2e^{2M_1}\int_0^U u^{\delta-1}\int_{\rH_u^{u,V}} v|\hL\phi_K^{(1,2)}|^2 \d u.
\end{Eq*}
Now by using the estimate \eqref{Eq:Eoeols} for $\phi_0$ and
 \eqref{Eq:Eoeons} for $w_K$, we obtain that 
\begin{Eq*}
I_{1,1}\lesssim&M_0^2e^{2M_1}\int_0^U u^{\delta-1}(e^{2M_1V^{2\delta}}M_0^2V^{2\delta}) \d u\\
\lesssim&M_0^4e^{4M_1}U^{\delta}V^{2\delta}.
\end{Eq*}
For the $\hLb$ derivative part  of $I_1$, again by using the estimates  \eqref{Eq:Eolsos2} and \eqref{Eq:Eonsos2}, 
we have
\begin{Eq*}
I_{1,2}:=&\int_0^U\int_u^V u^{1-\frac{\delta}{2}}v^{1-\frac{\delta}{2}}\|\phi_K^{(0,2)}\|_{L_\omega^2}^2\|\hLb\phi_K^{(1,2)}\|_{L_\omega^2}^2 r^2\d v\d u\\
\lesssim&\int_0^V\int_{\rHb_{0,U}^{v}} u^{1-\frac{\delta}{2}}v^{1-\frac{\delta}{2}}(M_0^2v^{2\delta-2}+M_0^2e^{2M_1}u^\delta v^{\delta-2})|\hLb\phi_K^{(1,2)}|^2 \d v\\
\lesssim&V^{\frac{\delta}{2}}\int_0^V (M_0^2v^{\delta-1}+M_0^2e^{2M_1}U^\delta v^{-1}) \int_{\rHb_{0,U}^{v}} u^{1-\frac{\delta}{2}}|\hLb\phi_K^{(1,2)}|^2 \d v.\\
\end{Eq*}
Then by using  \La{La:kt4} together with the estimates  \eqref{Eq:Eoeols} and \eqref{Eq:Eoeons}, 
we can show that 
\begin{Eq*}
I_{1,2}\lesssim&V^{\frac{\delta}{2}}\int_0^V (M_0^2v^{\delta-1}+M_0^2e^{2M_1}U^\delta v^{-1}) \kl(M_0^2U^{\frac{\delta}{2}}v^\delta e^{2M_1v^{2\delta}}\kr) \d v\\
\lesssim&V^{\frac{\delta}{2}}\int_0^V M_0^4e^{2M_1v^{2\delta}}U^{\frac{\delta}{2}}v^{2\delta-1}+M_0^4e^{4M_1}U^\frac{3\delta}{2} v^{\delta-1}\d v\\
\lesssim&\frac{M_0^4}{M_1} e^{2M_1V^{2\delta}}U^{\frac{\delta}{2}}V^{\frac{\delta}{2}}+M_0^4e^{4M_1}U^{\frac{3\delta}{2}}V^{\frac{3\delta}{2}}.
\end{Eq*}
For the angular derivative $\spartial$ part of $I_1$, similarly we can bound that 
\begin{Eq*}
I_{1,3}:=&\int_0^U\int_u^V u^{1-\frac{\delta}{2}}v^{1-\frac{\delta}{2}}\|\phi_K^{(0,2)}\|_{L_\omega^2}^2\|\spartial\phi_K^{(1,2)}\|_{L_\omega^2}^2 r^2\d v\d u\\
\lesssim&M_0^2e^{2M_1}\int_0^U u^{\delta-1}\int_{\rH_u^{u,V}} u|\spartial\phi_K^{(1,2)}|^2 \d u\\
\lesssim&M_0^4e^{4M_1}U^{\delta}V^{2\delta}.
\end{Eq*}
Finally, for the lower order term  $r^{-1}\phi_K$ in $I_1$, we  rely on the estimates \eqref{Eq:Eolsi} for $\phi_0$ 
and \eqref{Eq:Eonsi} for $w_K$ to conclude that 
\begin{Eq*}
I_{1,4}:=&\int_0^U\int_u^V u^{1-\frac{\delta}{2}}v^{1-\frac{\delta}{2}}\|\phi_K^{(0,2)}\|_{L_\omega^2}^2\|r^{-1}\phi_K^{(1,2)}\|_{L_\omega^2}^2 r^2\d v\d u\\
\lesssim&M_0^2e^{2M_1} \int_0^U u^{\delta}\int_u^V \int_{\rS^2}|\phi_K^{(1,2)}|^2 \d\omega\d v\d u\\
\lesssim&M_0^2e^{2M_1} \int_0^U u^{\delta}\kl(M_0^2e^{2M_1}u^{-1}V^{2\delta}\kr)\d u\\
\lesssim&M_0^4e^{4M_1}U^\delta V^{2\delta}.
\end{Eq*}
Next we estimate the term $I_2$. First notice that 
\begin{align*}
v|Lf^{(0,2)}|+u |\Lb f^{(0,2)}|\lesssim  |Sf^{(0,2)}|+\sum\limits_{j=1}^3|L_j f^{(0, 2)}|+|\Omega f^{(0, 2)}|.  
\end{align*}
In view of the estimates  \eqref{Eq:Eolsos1} for $\phi_0$ 
and \eqref{Eq:Eonsos1} for $w_K$, we can show that 
\begin{Eq*}
I_{2,1}:=&\int_0^U\int_u^V u^{1-\frac{\delta}{2}}v^{1-\frac{\delta}{2}}\|\phi_K^{(1,2)}\|_{L_\omega^2}^2\| (L,u^{\frac{1}{2}}v^{-\frac{1}{2}}\Lb)\phi_K^{(0,2)}\|_{L_\omega^2}^2 r^2\d v\d u\\
\lesssim&\int_0^U\int_u^V u^{1-\frac{\delta}{2}}v^{1-\frac{\delta}{2}}(M_0^2e^{2M_1}r^{-1} v^{2\delta-1})(M_0^2e^{2M_1} r^{-1}u^{-1}v^{2\delta-2}) r^2\d v\d u\\
\lesssim&M_0^4 e^{4M_1}U^{3\delta}.
\end{Eq*}
For the $r^{\frac{1}{2}}v^{-\frac{1}{2}}\hLb$ derivative part if $I_2$, 
similarly we can bound that  
\begin{Eq*}
I_{2,2}:=&\int_0^U\int_u^V u^{1-\frac{\delta}{2}}v^{1-\frac{\delta}{2}}\|\phi_K^{(1,2)}\|_{L_\omega^2}^2\| r^{\frac{1}{2}}v^{-\frac{1}{2}}\hLb\phi_K^{(0,2)}\|_{L_\omega^2}^2 r^2\d v\d u\\
\lesssim&\int_0^V\int_{\rHb_{0,U}^v}r u^{1-\frac{\delta}{2}}v^{-\frac{\delta}{2}}(M_0^2r^{-1}v^{2\delta-1}+M_0^2e^{M_1}r^{-1}u^\delta v^{\delta-1})| \hLb\phi_K^{(0,2)}|^2 \d v\\
\lesssim&\frac{M_0^4}{M_1} e^{2M_1V^{2\delta}}U^{\frac{\delta}{2}}V^{\frac{\delta}{2}}+M_0^4e^{4M_1}U^{\frac{3\delta}{2}}V^{\frac{3\delta}{2}}.
\end{Eq*}
For the angular derivative  $\spartial$ part in  $I_2$, 
by using the estimates \eqref{Eq:Eoeols} and \eqref{Eq:Eoeons}, 
we can show that 
\begin{Eq*}
I_{2,3}:=&\int_0^U\int_u^V u^{1-\frac{\delta}{2}}v^{1-\frac{\delta}{2}}\|\phi_K^{(1,2)}\|_{L_\omega^2}^2\| \spartial\phi_K^{(0,2)}\|_{L_\omega^2}^2 r^2\d v\d u\\
\lesssim&\int_{\rD_{0,U}^{0,V}} u^{1-\frac{\delta}{2}}v^{1-\frac{\delta}{2}}(M_0^2e^{2M_1}r^{-1} v^{2\delta-1})|\spartial\phi_K^{(0,2)}|^2\\
\lesssim&M_0^2e^{2M_1}U^\delta\int_{\rD_{0,U}^{0,V}} r^{-1} v|\spartial\phi_K^{(0,2)}|^2\\
\lesssim&M_0^4e^{4M_1}U^\delta V^{2\delta}.
\end{Eq*}
Finally for the lower order term $r^{-1}\phi_K$  in $I_2$, we have 
\begin{Eq*}
&\int_0^U\int_u^V u^{1-\frac{\delta}{2}}v^{1-\frac{\delta}{2}}\|\phi_K^{(1,2)}\|_{L_\omega^2}^2\| r^{-1}\phi_K^{(0,2)}\|_{L_\omega^2}^2 r^2\d v\d u\\
\lesssim&\int_0^U\int_u^V u^{1-\frac{\delta}{2}}v^{1-\frac{\delta}{2}}\|\phi_K^{(1,2)}\|_{L_\omega^2}^2(M_0^2e^{2M_1} r^{-2}v^{2\delta-2}) r^2\d v\d u\\
\lesssim&M_0^4e^{4M_1}U^\delta V^{2\delta}.
\end{Eq*}
The term $I_3$ is easier and can be controlled in a similar way
$$I_3 \lesssim M_0^6e^{6M_1}U^{5\delta}.$$ 
 Combining all the above estimates, we have shown that for all  $U\leq \min\{\varepsilon,V\}\leq 1$
 \begin{Eq*}
E[w_{K+1}^{(1,2)}](U,V)\leq&CU^{\frac{\delta}{2}}V^{\frac{\delta}{2}}(I_1+I_2+I_3)\\
\leq& CU^{\frac{\delta}{2}}V^{\frac{\delta}{2}}\kl(M_0^4e^{4M_1}U^\delta V^{2\delta}+\frac{M_0^4}{M_1}e^{2M_1V^{2\delta}}U^{\frac{\delta}{2}}V^{\frac{\delta}{2}}+M_0^6e^{6M_1}U^{5\delta}\kr)\\
\leq&\kl(CM_0^2e^{4M_1}\varepsilon^{\frac{\delta}{2}}+C\frac{M_0^2}{M_1}+CM_0^4e^{6M_1}\varepsilon^{4\delta}\kr)M_0^2e^{2M_1V^{2\delta}}U^\delta V^\delta.
\end{Eq*}
Now Let 
 $$M_1=4CM_0^2,\quad \varepsilon=(4CM_0^2e^{4M_1})^{-\frac{2}{\delta}}.$$ 
We then conclude that the linear solution $w_{k+1}$ also verifies the energy estimate \eqref{Eq:Eoeons}. We thus finished the proof for the Proposition.  
\end{proof}

\subsection{End of the proof for Theorem \Tm{Tm:M1}}
 To show the existence of the solution, we also need to show that the sequence of linear solutions  $\{w_k\}$ is Cauchy sequence in certain sense. 
First of all  similarly to the proof of \eqref{Eq:Eoeons}, we can show that 
\begin{Eq*}
&U^{-\delta} V^{-\delta}e^{-2M_1V^{2\delta}}E[w_{K+1}-w_{K}](U,V)\\
\leq& CU^{-\delta} V^{-\delta}e^{-2M_1V^{2\delta}}\int_{\rD_{0,U}^{0,V}}\kl|(v\hL+u\hLb)(w_{K+1}-w_K)\kr|\cdot|F_K-F_{K-1}|\\
\leq&CU^{-\frac{\delta}{2}}V^{-\frac{\delta}{2}}e^{-2M_1V^{2\delta}}\int_{\rD_{0,U}^{0,V}}u^{1-\frac{\delta}{2}}v^{1-\frac{\delta}{2}}|F_K-F_{K-1}|^2+ \frac{U^{-\frac{\delta}{2}}V^{-\frac{\delta}{2}}e^{-2M_1V^{2\delta}}}{2}E[w_{K+1}-w_K](U,V),
\end{Eq*}
in which the second term on right hand side can be absorbed. It then suffices to control the difference of the nonlinearity. By the assumption, notice that 
\begin{align*}
|F_K-F_{K-1}|\lesssim & |(\partial\phi_K,\partial\phi_{K-1})||w_K-w_{K-1}|+|(\phi_K,\phi_{K-1})||\partial\kl(w_K-w_{K-1}\kr)|\\ 
&+|(\phi_K,\phi_{K-1})|^2 |w_K-w_{K-1}|.
\end{align*}
The bound for these terms on the right hand is similar to that for $I_1$, $ I_2$, $I_3$. With out loss of generality, we only deal with the most difficult term  $|\phi_K||\hLb\kl(w_K-w_{K-1}\kr)|$. In fact in view of the estimates \eqref{Eq:Eolsos2} and \eqref{Eq:Eonsos2}, we can show that 
\begin{Eq*}
I_{1,2}':=&\int_{\rD_{0,U}^{0,V}}u^{1-\frac{\delta}{2}}v^{1-\frac{\delta}{2}}|\phi_K|^2|\hLb\kl(w_K-w_{K-1}\kr)|^2\\
\lesssim&\int_0^U\int_u^Vu^{1-\frac{\delta}{2}}v^{1-\frac{\delta}{2}}\|\phi_K^{(0,2)}\|_{L_\omega^2}^2\|\hLb(w_K-w_{K-1})\|_{L_\omega^2}^2 r^2\d v\d u\\
\lesssim&\int_0^U\int_u^Vu^{1-\frac{\delta}{2}}v^{1-\frac{\delta}{2}}(M_0^2v^{2\delta-2}+M_0^2e^{2M_1}u^\delta v^{\delta-2})\|\hLb(w_K-w_{K-1})\|_{L_\omega^2}^2 r^2\d v\d u\\
\lesssim&V^{\frac{\delta}{2}}\int_0^V (M_0^2v^{\delta-1}+M_0^2e^{2M_1}U^\delta v^{-1}) \int_{\rHb_{0,U}^{v}} u^{1-\frac{\delta}{2}}|\hLb(w_K-w_{K-1})|^2 \d v.
\end{Eq*}
Then by using  \La{La:kt4}, we estimate that  
\begin{Eq*}
I_{1,2}'\lesssim&V^{\frac{\delta}{2}}\int_0^V (M_0^2v^{\delta-1}+M_0^2e^{2M_1}U^\delta v^{-1}) \kl(U^{\frac{\delta}{2}}v^\delta e^{2M_1v^{2\delta}}\kr)\kl(U^{-\delta} v^{-\delta}e^{-2M_1v^{2\delta}}E[w_{K+1}-w_K](U,v)\kr) \d v\\
\lesssim&V^{\frac{\delta}{2}}\int_0^V M_0^2e^{2M_1v^{2\delta}}U^{\frac{\delta}{2}}v^{2\delta-1}+M_0^2e^{4M_1}U^\frac{3\delta}{2} v^{\delta-1}\d v\cdot \sup_v\kl(U^{-\delta} v^{-\delta}e^{-2M_1v^{2\delta}}E[w_{K+1}-w_K](U,v)\kr)\\
\lesssim&\kl(\frac{M_0^2}{M_1} e^{2M_1V^{2\delta}}U^{\frac{\delta}{2}}V^{\frac{\delta}{2}}+M_0^2e^{4M_1}U^{\frac{3\delta}{2}}V^{\frac{3\delta}{2}}\kr)\cdot \sup_v\kl(U^{-\delta} v^{-\delta}e^{-2M_1v^{2\delta}}E[w_{K+1}-w_K](U,v)\kr).
\end{Eq*}
Combining these estimate, we then obtain that 
\begin{Eq*}
&U^{-\delta} V^{-\delta}e^{-2M_1V^{2\delta}}E[w_{K+1}-w_{K}](U,V)\\
\leq&C\kl(M_0^2e^{4M_1-2M_1V^{2\delta}}U^{\frac{\delta}{2}} V^{\frac{3\delta}{2}}+\frac{M_0^2}{M_1}+M_0^4e^{6M_1-2M_1V^{2\delta}}U^{4\delta}\kr)\cdot \sup_v\kl(U^{-\delta} v^{-\delta}e^{-2M_1v^{2\delta}}E[w_{K+1}-w_K](U,v)\kr)\\
\leq&\kl(CM_0^2e^{4M_1}\varepsilon^{\frac{\delta}{2}}+\frac{CM_0^2}{M_1}+CM_0^4e^{6M_1}\varepsilon^{4\delta}\kr)\cdot \sup_v\kl(U^{-\delta} v^{-\delta}e^{-2M_1v^{2\delta}}E[w_{K+1}-w_K](U,v)\kr).
\end{Eq*}
In particular by taking 
 $$M_1=4CM_0^2,\quad \varepsilon=(4CM_0^2e^{4M_1})^{-\frac{2}{\delta}},$$ 
we see that $w_{K}$ is Cauchy sequence in the space with the weighted energy norm $E[\cdot]$.  Hence in view of \Pn{La:Eoi}, we conclude that $w_k$ converges to some solution $w$, such that $\phi:=\phi_0+w$ which  solves the equation \eqref{Eq:Me}.
It is also obvious that the solution is unique in the sense of  \eqref{Eq:Er}.
Now we thus finished the proof of \Tm{Tm:M1}.

\section{Proof of \Tm{Tm:M2}}
As an application of \Tm{Tm:M1}, we now show that the regularity for the scattering problem for the Maxwell-Klein-Gordon equation studied in our previous work  \cite{Yang:MKG:scattering} with data prescribed on the future null infinity could be improved. The future null infinity is topologically equivalent to $\mathbb{R}\times \mathbb{S}^2$, which has two ends with retard time $u\in \mathbb{R}$ tending to $\pm \infty$. The data on the future null infinity is localized in the sense that it decays in terms of $u$. By using our local existence result developed here with singular data, we could improve the decay rate assumption for the data near the time like infinity when $u\rightarrow \infty$.  The proof is similar to that in \cite{Yang:MKG:scattering}. For simplicity, we will only sketch the proof here.  

\subsection{Conformal compactification}
For the scattering problem for the Maxwell-Klein-Gordon system studied in \cite{Yang:MKG:scattering}, it can be viewed as characteristic initial value problem for the nonlinear system with data prescribed at the future null infinity. The future null infinity of the Minkowski space $\mathbb{R}^{1+3}$ is an infinite conic null cone. Hence the first step to construct the solution for the full system is to show the existence of the solution in a neighborhood of the time like infinity which is the conic point of the null cone. This was carried out via the conformal compactification method. 

For fixed $U_*\in \mathbb{R}$, define
\begin{Eq*}
&T_*:=2U_*-2^{-1},\qquad \Lambda:=(t-T_*)^2-|x|^2,\qquad (\tt,\tx)=\Lambda^{-1}(t-T_*,x),\\
&\tphi(\tt,\tx):=\Lambda\phi(t,x),\qquad\tA(\tt,\tx):=A(t,x),\qquad \tF:=\d\tA,
\end{Eq*}
The conformal symmetry of the Maxwell-Klein-Gordon system indicates that 
 $(F,\phi)$ solves the system \eqref{Eq:MKGe-Fphi} in the region $\rD_{U_*,\infty}^{U_*,\infty}$ is equivalent to  that  
$(\tF,\tphi)$ solves the same system in the compact region $\trD_{0,1}^{0,1}$ under $(\tt,\tx)$ coordinates. The initial data on the future null infinity then give the characteristic data on the conic cone $\tilde{\mathcal{H}}_0$. 

The tilde quantities  are the associated ones under the coordinates $(\tt,\tx)$, which are defined in the same way as those under the coordinates  $(t,x)$.
By defining $$u_*:=2^{-1}(t-T_*-r),\quad v_*:=2^{-1}(t-T_*+r),$$
 we can compute that  
\begin{Eq}\label{Eq:Rbofatf}
&\tu=(4v_*)^{-1},&\quad &\tv=(4u_*)^{-1}, &\quad& \Lambda=4u_*v_*=\tr^{-1}r,\\
&\tLb=-4v_*^2L,&\quad& \tL=-4u_*^2\Lb,&\quad& \te_i=4u_*v_* e_i,&\quad&\tomega=\omega.
\end{Eq}
In particular the null components of the Maxwell field and the scalar field verify the relation
\begin{Eq*}
\talpha_A(0,\tv,\tomega)=&\lim_{\tu\rightarrow 0}\tF_{\tL\te_A}(\tu,\tv,\tomega)=-16u_*^3\lim_{v\rightarrow\infty}[v_*F_{\Lb e_A}](u,v,\omega)=-16u_*^3\Alphab_A(u,\omega),\\
\tphi(0,\tv,\tomega)=&\tv^{-1}\lim_{\tu\rightarrow 0}[\tr\tphi](\tu,\tv,\tomega)=4u_*\lim_{v\rightarrow \infty}[r\phi](u,v,\omega)=4u_*\Phi(u,\omega).
\end{Eq*}
In view of the assumption that $\|(\Alphab,\Phi)\|_{S\!N_{U_*}}<\infty$, we then conclude that 
\begin{Eq}\label{Eq:Eo_talpha_o_tH}
&\sum\Low{n_1\leq 1\\n_2\leq6-3n_1}\int_0^1\int_{\trS^2}\tv^{3-2\delta}\kl|(\tv\tL)^{n_1}\tpartial_{\tomega}^{n_2} \talpha\kr|_{\tu=0}^2\d\tomega\d \tv\\
\lesssim&\sum\Low{n_1\leq 1\\n_2\leq6-3n_1}\int_{U_*}^\infty \int_{\rS^2} u_*^{-3+2\delta}\kl|(u_*\Lb)^{n_1}\partial_\omega^{n_2} (u_*^3\Alphab)\kr|^2\d\omega\frac{\d u}{u_*^2}\\
\lesssim&\sum\Low{n_1\leq 1\\n_2\leq6-3n_1}\int_{U_*}^\infty \int_{\rS^2} \kl<u\kr>^{1+2\delta+2n_1}\kl|\Lb^{n_1}\partial_\omega^{n_2} \Alphab\kr|^2\d\omega\d u<\infty
\end{Eq}
and similarly
\begin{Eq}\label{Eq:Eo_tsAphi_o_tH}
&\sum\Low{n_1\leq 2\\n_2\leq 5-2\max\{n_1-1,0\}}\int_0^1\int_{\trS^2}\tv^{1-2\delta}\kl|(\tv\tD_\tL)^{n_1}\tD_\tomega^{n_2}\tphi\kr|_{\tu=0}^2\d\tomega\d \tv
<\infty.
\end{Eq}

\subsection{Reducing to the characteristic initial value problem}
After the above comformal transformation, we are now reducing the local existence of solution to the original Maxwell-Klein-Gordon system in a neighborhood of the timelike infinity to the characteristic initial value problem to the same system on the conic region $\rD_{0,1}^{0,1}$. For simplicity, in this section, we omit the tilde symbol. To construct the local solution to the equation \eqref{Eq:MKGe-Fphi}, we make use of the \emph{Lorentz} gauge
\[
\pa^\mu A_\mu =0,
\]
under which the equation \eqref{Eq:MKGe-Fphi} is reduced to the nonlinear wave equaiton  for  $\phi$ and $A$
\begin{Eq*}
\begin{cases}
\square A_\mu=-\Im(\phi\cdot\overline{\partial_\mu \phi})+A_\mu|\phi|^2,\\
\square \phi=-2\I A^\mu\partial_\mu\phi+A^\mu A_\mu\phi.
\end{cases}
\end{Eq*}
This is of the type of equation \eqref{Eq:Me}. To apply \Tm{Tm:M1}, we then need to show that the initial data for $A$ and $\phi$ verify the assumption \eqref{Eq:Cod}. First of all, notice that the Lorentz gauge condition is equivalent to find some function $\chi$ such that 
\[
\Box\chi=\pa^\mu A_\mu
\] 
for some given connection field $A$. For the conic region $\rD_{0,1}^{0,1}$ we are interested in here, it then suffices to prescribe the data for $\chi$ on the cone $\mathcal{H}_0^{0, 1}$, which can be realized by simply setting 
\[
\chi(0)=0,\quad L\chi= A_L.
\]
In particular such special data for $\chi$ leads to the conclusion that $A_L=0$ on $\rH_{0}^{0,1}$. Then by using the equation, we can compute the other components for the connection field $A$ on $\rH_{0}^{0,1}$
\begin{Eq}\label{Eq:c_eq_ALb}
L(rL(rA_\Lb))=\sum\Low{n_1\leq 1, n_2\leq 1}C^j_{n_1,n_2}(rL)^{n_1}\partial_\omega^{n_2}A_{e_j}-r^2\Im(\phi\cdot \overline{L\phi}),\qquad
L(rA_{e_j})=r\alpha_j.
\end{Eq}
See Section 6 in \cite{Yang:MKG:scattering} for detailed discussion.
Since we are restricting the computations on the initial hypersurface $\mathcal{H}_0^{0, 1}$, we will omit the lower-script $u=0$ for simplicity.

For the angular part $\sA:=(A_{e_1},A_{e_2})$, first we show that 
\begin{Eq*}
E_0[\sA]:=&\sum\Low{n_1\leq 2\\n_2\leq 6-3\max\{0,n_1-1\}}\int_0^1\int_{\rS^2}r^{1-2\delta}|(rL)^{n_1}\partial_\omega^{n_2}\sA|^2\d\omega\d r\\
\lesssim&\Bigg(\sum\Low{n_1=0\\n_2\leq 6}+\sum\Low{1\leq n_1\leq 2\\n_2\leq 9-3n_1}\Bigg)\int_0^1\int_{\rS^2}r^{-1-2\delta}|(rL)^{n_1}\partial_\omega^{n_2}(r\sA)|^2\d\omega\d r.
\end{Eq*}
We use the inequality \eqref{Eq:kt3} to bound  the first summation term on the right hand side. For the second term, note that $rL$ commutes with $\partial_\omega$. In view of  \eqref{Eq:Eo_talpha_o_tH}, we obtain that 
\begin{Eq*}
E_0[\sA]\lesssim&\sum\Low{n_1\leq 1\\n_2\leq 6-3n_1}\int_0^1\int_{\rS^2}r^{1-2\delta}|(rL)^{n_1}\partial_\omega^{n_2}L(r\sA)|^2\d\omega\d r\\
\lesssim&\sum\Low{n_1\leq 1\\n_2\leq 6-3n_1}\int_0^1\int_{\rS^2}r^{3-2\delta}|(rL)^{n_1}\partial_\omega^{n_2}\alpha|^2\d\omega\d r\\
<&\infty.
\end{Eq*}
Moreover by using  the inequality \eqref{Eq:kt3} again, we also have  the bound  
\begin{Eq*}
H_0[\sA]:=&\sup_{r\in(0,1]}\sum\Low{n_1\leq 1\\n_2\leq 6-3n_1}r^{2-2\delta}\int_{\rS^2}|(rL)^{n_1}\partial_\omega^{n_2}\sA|^2\d\omega\\
\lesssim& \sup_{r\in(0,1]}\sum\Low{n_1\leq 1\\n_2\leq 6-3n_1}r^{-2\delta}\int_{\rS^2}|(rL)^{n_1}\partial_\omega^{n_2}(r\sA)|^2\d\omega\\
\lesssim& E_0[\sA]\\
<&\infty.
\end{Eq*}
Next for the scalar field $\phi$, by using \eqref{Eq:Eo_tsAphi_o_tH} and \eqref{Eq:kt3}, by definition, we have 
\begin{Eq*}
H_0^D[\phi]:=&\sup_{r\in(0,1]}\sum\Low{n_1\leq 1\\n_2\leq 5-2n_1}r^{2-2\delta}\int_{\rS^2}|(rD_L)^{n_1}D_\omega^{n_2}\phi|^2\d\omega\\
\lesssim&\sum\Low{n_1\leq 2\\n_2\leq 5-2\max\{n_1-1,0\}}\int_0^1\int_{\rS^2}r^{1-2\delta}|(rD_L)^{n_1}D_\omega^{n_2}\phi|^2\d\omega\d r\\
\equiv &E_0^D[\phi]\\
<&\infty.
\end{Eq*}
In particular H\"older's inequality together with Sobolev inequality on the sphere  leads to 
\begin{Eq*}
E_0[\phi]:=&\sum\Low{n_1\leq 2\\n_2\leq 5-2\max\{n_1-1,0\}}\int_0^1\int_{\rS^2}r^{1-2\delta}|(rL)^{n_1}\partial_\omega^{n_2}\phi|^2\d\omega\d r\\
\lesssim& \sum\Low{n_1\leq 2\\n_2\leq 5-2\max\{n_1-1,0\}}\int_0^1\int_{\rS^2}r^{1-2\delta}|(rD_L)^{n_1}(D_\omega-ir\sA)^{n_2}\phi|^2\d\omega\d r\\
\lesssim&\kl(E_0[\sA]+E_0^D[\phi]\kr)\kl(1+H_0[\sA]+H_0^D[\phi]\kr)^4\\
<&\infty.
\end{Eq*}
By a similar argument, we also have 
\begin{Eq*}
H_0[\phi]:=&\sup_{r\in(0,1]}\sum\Low{n_1\leq 1\\n_2\leq 5-2n_1}r^{2-2\delta}\int_{\rS^2}|(rL)^{n_1}\partial_\omega^{n_2}\phi|^2\d\omega\\
\lesssim& E_0[\phi]\\
<&\infty.
\end{Eq*}
Once we have the desired estimates for the angular part of the connection field and the scalar field, we are now ready to control the tangential part $A_\Lb$ of the connection field.  In view of the inequality  \eqref{Eq:kt3}  and the estimate \eqref{Eq:c_eq_ALb}, we can show that 
\begin{Eq*}
E_0[A_\Lb]:=&\sum\Low{n_1\leq 2\\n_2\leq 5-n_1}\int_0^1\int_{\rS^2}r^{1-2\delta}|(rL)^{n_1}\partial_\omega^{n_2}A_\Lb|^2\d\omega\d r\\
\lesssim&\Bigg(\sum\Low{n_1=0\\n_2\leq 5}+\sum\Low{1\leq n_1\leq 2\\n_2\leq 5-n_1}\Bigg)\int_0^1\int_{\rS^2}r^{-1-2\delta}|(rL)^{n_1}\partial_\omega^{n_2}(rA_\Lb)|^2\d\omega\d r\\
\lesssim&\sum\Low{n_1\leq 1\\n_2\leq 5}\int_0^1\int_{\rS^2}r^{1-2\delta}|(rL)^{n_1}\partial_\omega^{n_2}L(rA_\Lb)|^2\d\omega\d r\\
\lesssim&\sum\Low{n_2\leq 5}\int_0^1\int_{\rS^2}r^{1-2\delta}|\partial_\omega^{n_2}L(rL(rA_\Lb))|^2\d\omega\d r\\
\lesssim&\sum\Low{n_2\leq 5}\int_0^1\int_{\rS^2}r^{1-2\delta}\kl|\partial_\omega^{n_2}\kl((rL)^{\leq 1}\partial_\omega^{\leq 1}\sA,r^2\phi\cdot\overline{L\phi}\kr)\kr|^2\d\omega\d r\\
\lesssim&E_0[\sA]+E_0[\phi]H_0[\phi]\\
<&\infty.
\end{Eq*}
Therefor we have shown that under the assumption of Theorem \ref{Tm:M2} and the Lorentz gauge condition, the initial data for the connection field $A$ and the scalar field $\phi$ verify the conditions in Theorem \ref{Tm:M1}. 

\subsection{Proof for Theorem \ref{Tm:M2}}
Based on the above argument, by using Theorem \ref{Tm:M1}, we conclude that there exists a solution $(\tilde{A}, \tilde{\phi})$ to the Maxwell-Klein-Gordon equation under Lorenz gauge 
in a neighborhood of the initial cone
 $$\{\tt/(\tt^2-\tr^2)\geq (2\varepsilon)^{-1},~\tv\leq 1\}\subset\trD_{0,\varepsilon}^{0,1}$$
  for some small constant $\varepsilon>0$. In particular the  weighted energy of $(\tF,\tphi)$ on the hyperboloid 
$$\tilde\varC:=\{\tt/(\tt^2-\tr^2)= (2\varepsilon)^{-1},~\tv\leq 1\}$$ 
is finite. Since this hyperboloid is away from the conic point, by doing the reverse conformal transformation onto the original Minkowski space, we can conclude that the solution $(F,\phi)$ to the Maxwell-Klein-Gordon equation in the region 
$$\{t\geq(2\varepsilon)^{-1}+T_*,~u\geq U_*\}\subset \rD_{U_*,\infty}^{(4\varepsilon)^{-1}+2^{-1}T_*,\infty}$$
has finite energy  on the Cauchy hypersurface $\{t=(2\varepsilon)^{-1}+T_*,~u\geq U_*\}$. Then in view of the classical result of Klainerman-Machedon in \cite{MKGkl}, we  can show that the solution exists   in the region 
$\{t\leq(2\delta)^{-1}+T_*,~u\geq U_*\}$.
We thus have extended the solution to the whole region  $\rD_{U_*,\infty}^{U_*,\infty}$ and boundedness of the weighted energy norms follows accordingly via conformal transformation. We thus finished the proof for  Theorem \ref{Tm:M2}.



\begin{thebibliography}{10}

\bibitem{Bap21:CIV:EV}
P.~Baptiste.
\newblock Characteristic {C}auchy problem on the light cone for the
  {E}instein-{V}lasov system in temporal gauge.
\newblock {\em Classical Quantum Gravity}, 38(18):Paper No. 185009, 42, 2021.

\bibitem{Cabet08:CIV:NLW}
A.~Cabet.
\newblock Local existence of a solution of a semilinear wave equation with
  gradient in a neighborhood of initial characteristic hypersurfaces of a
  {L}orentzian manifold.
\newblock {\em Comm. Partial Differential Equations}, 33(10-12):2105--2156,
  2008.

\bibitem{Chruciel16:CIV:NLW}
A.~Cabet, P.~Chru\'{s}ciel, and R.~Tagne Wafo.
\newblock On the characteristic initial value problem for nonlinear symmetric
  hyperbolic systems, including {E}instein equations.
\newblock {\em Dissertationes Math.}, 515:72, 2016.

\bibitem{cagnac81:NW:CI}
F.~Cagnac.
\newblock Probl\`eme de {C}auchy sur un cono\"{\i}de caract\'{e}ristique pour
  des \'{e}quations quasi-lin\'{e}aires.
\newblock {\em Ann. Mat. Pura Appl. (4)}, 129:13--41 (1982), 1981.

\bibitem{Cagnac94:CIV:NLW}
F.~Cagnac and M.~Dossa.
\newblock Probl\`eme de {C}auchy sur un c\^{o}no\"{\i}de caract\'{e}ristique.
  {A}pplications \`a certains syst\`emes non lin\'{e}aires d'origine physique.
\newblock In {\em Physics on manifolds ({P}aris, 1992)}, volume~15 of {\em
  Math. Phys. Stud.}, pages 35--47. Kluwer Acad. Publ., Dordrecht, 1994.

\bibitem{Chrusciel13:EE:pastnull}
P.~Chru\'{s}ciel and T.~Paetz.
\newblock Solutions of the vacuum {E}instein equations with initial data on
  past null infinity.
\newblock {\em Classical Quantum Gravity}, 30(23):235037, 14, 2013.

\bibitem{Yang:MKG:scattering}
W.~Dai, H.~Mei, D.~Wei, and S.~Yang.
\newblock {The Maxwell-Klein-Gordon equation with scattering data}.
\newblock 2023.
\newblock ar{X}iv:2302.08732.

\bibitem{Dossa03:CIV:YMH}
M.~Dossa.
\newblock Probl\`emes de {C}auchy sur un cono\"{\i}de caract\'{e}ristique pour
  les \'{e}quations d'{E}instein (conformes) du vide et pour les \'{e}quations
  de {Y}ang-{M}ills-{H}iggs.
\newblock {\em Ann. Henri Poincar\'{e}}, 4(2):385--411, 2003.

\bibitem{Dossa01:CIV:NLW}
M.~Dossa and A.~Bah.
\newblock Solutions de probl\`emes de {C}auchy semi-lin\'{e}aires hyperboliques
  sur un cono\"{\i}de caract\'{e}ristique.
\newblock {\em C. R. Acad. Sci. Paris S\'{e}r. I Math.}, 333(3):179--184, 2001.

\bibitem{Dossa10:CIVP:EYMH}
M.~Dossa and C.~Tadmon.
\newblock The characteristic initial value problem for the
  {E}instein-{Y}ang-{M}ills-{H}iggs system in weighted {S}obolev spaces.
\newblock {\em Appl. Math. Res. Express. AMRX}, (2):154--231, 2010.

\bibitem{Dossa19:CIVP:NW}
M.~Dossa and R.~Tagne Wafo.
\newblock Solutions with a uniform time of existence of a class of
  characteristic semi-linear wave equations near {${\rm S}^+$}.
\newblock {\em Comm. Partial Differential Equations}, 44(10):940--989, 2019.

\bibitem{Friedlander75:wave:curved}
F.~G. Friedlander.
\newblock {\em The wave equation on a curved space-time}.
\newblock Cambridge Monographs on Mathematical Physics, No. 2. Cambridge
  University Press, Cambridge-New York-Melbourne, 1975.

\bibitem{Friedrich13:EE:CIVP}
H.~Friedrich.
\newblock The {T}aylor expansion at past time-like infinity.
\newblock {\em Comm. Math. Phys.}, 324(1):263--300, 2013.

\bibitem{Dossa05:CIV:SLW}
D.~Houpa and M.~Dossa.
\newblock Probl\`emes de {G}oursat pour des syst\`emes semi-lin\'{e}aires
  hyperboliques.
\newblock {\em C. R. Math. Acad. Sci. Paris}, 341(1):15--20, 2005.

\bibitem{MKGkl}
S.~Klainerman and M.~Machedon.
\newblock On the {M}axwell-{K}lein-{G}ordon equation with finite energy.
\newblock {\em Duke Math. J.}, 74(1):19--44, 1994.

\bibitem{Luk12:EE:local}
J.~Luk.
\newblock On the local existence for the characteristic initial value problem
  in general relativity.
\newblock {\em Int. Math. Res. Not. IMRN}, (20):4625--4678, 2012.

\bibitem{Igor:impulsewave:local}
J.~Luk and I.~Rodnianski.
\newblock Local propagation of impulsive gravitational waves.
\newblock {\em Comm. Pure Appl. Math.}, 68(4):511--624, 2015.

\bibitem{Igor17:Interaction:2}
J.~Luk and I.~Rodnianski.
\newblock Nonlinear interaction of impulsive gravitational waves for the vacuum
  {E}instein equations.
\newblock {\em Camb. J. Math.}, 5(4):435--570, 2017.

\bibitem{Hagen90:CIVP:NLW}
H.~M\"{u}ller.
\newblock Characteristic initial value problem for hyperbolic systems of second
  order differential equations.
\newblock {\em Ann. Inst. H. Poincar\'{e} Phys. Th\'{e}or.}, 53(2):159--216,
  1990.

\bibitem{Rendall90:CIVP:NLW}
A.~Rendall.
\newblock Reduction of the characteristic initial value problem to the {C}auchy
  problem and its applications to the {E}instein equations.
\newblock {\em Proc. Roy. Soc. London Ser. A}, 427(1872):221--239, 1990.

\bibitem{YangYu:MKGrough}
D.~Wei, S.~Yang, and P.~Yu.
\newblock {On the global dynamics of Yang-Mills-Higgs equations}.
\newblock 2022.
\newblock ar{X}iv:2203.12143.

\bibitem{YangYu:MKG:smooth}
S.~Yang and P.~Yu.
\newblock On global dynamics of the {M}axwell-{K}lein-{G}ordon equations.
\newblock {\em Camb. J. Math.}, 7(4):365--467, 2019.

\end{thebibliography}

\end{document}